\newtheorem{theorem}{Theorem}[section]
\newtheorem{lemma}[theorem]{Lemma}
\newtheorem{definition}[theorem]{Definition}
\newtheorem{remark}[theorem]{Remark}
\newtheorem{assumption}{Assumption}
\numberwithin{equation}{section}
\begin{document}

\title[stochastic acoustic and elastic scattering problems]{Regularity of
distributional solutions to stochastic acoustic and elastic scattering problems}

\author{Peijun Li}
\address{Department of Mathematics, Purdue University, West Lafayette, Indiana
47907, USA.}
\email{lipeijun@math.purdue.edu}

\author{Xu Wang}
\address{Department of Mathematics, Purdue University, West Lafayette, Indiana
47907, USA.}
\email{wang4191@purdue.edu}

\thanks{The research is supported in part by the NSF grant DMS-1912704.}

\subjclass[2010]{35R60, 60H15, 35B65}

\keywords{regularity, the Helmholtz equation, the elastic wave equation, random
media, microlocally isotropic generalized Gaussian random field} 

\begin{abstract}
This paper is concerned with the well-posedness and regularity of the
distributional solutions for the stochastic acoustic and elastic scattering
problems. We show that the regularity of the solutions depends on the
regularity of both the random medium and the random source. 
\end{abstract}

\maketitle
\section{Introduction}

The acoustic and elastic wave equations are two fundamental equations
to describe wave propagation. They have significantly applications in diverse
scientific
areas such as remote sensing, nondestructive testing, geophysical prospecting,
and medical imaging \cite{CK13}. In practice, due to the unpredictability of the
environments and incomplete knowledge of the systems, the radiating sources
and/or the host media, and hence the radiated fields may not be deterministic
but rather are modeled by random fields \cite{FGPS07}. Their governing
equations are some forms of stochastic differential equations and their
solutions are random fields instead of their deterministic counterparts of
regular functions \cite{C75, I78, K62}. Regularity theory of stochastic wave
equations has played an important role in the study of partial differential
equations and attracted a lot of attention \cite{J02, DF98, MS99}. As is known,
a basic problem in classical scattering theory is the scattering of a
time-harmonic wave by an inhomogeneous medium. This paper is concerned with the
well-posedness and regularity of the solutions for the time-harmonic stochastic
acoustic and elastic scattering problems.

For the case of acoustic waves, it is to find the induced pressure $u$ which
satisfies the Helmholtz equation
\begin{align}\label{eq:H}
\Delta u+k^2(1+\rho)u=f\quad\text{in}~\mathbb R^d,
\end{align}
where $d=2$ or $3$, $k>0$ is the wavenumber, $\rho$ describes the inhomogeneous
medium and is assumed to be a microlocally isotropic generalized Gaussian random
field (cf. Definition \ref{def:iso}) defined in a bounded domain $D_\rho$, and
$f$ is assumed to be either a microlocally isotropic generalized Gaussian random
field in a bounded domain $D_f$ or a point source given by a delta
distribution. In addition, the pressure $u$ is required to satisfy the Sommerfeld radiation condition 
\begin{align}\label{eq:RCH}
\lim_{|x|\to\infty}|x|^{\frac{d-1}2}\left(\partial_{|x|} u-{\rm
i}ku\right)=0.
\end{align}

The elastic analogue is to find the displacement $\bm u$ satisfying the Navier
equation
\begin{align}\label{eq:E}
\mu\Delta\bm u+(\lambda+\mu)\nabla\nabla\cdot\bm u+k^2(\bm I+\bm M)\bm u=\bm f\quad\text{in}~\mathbb R^d,
\end{align}
where $\bm I$ is the identity matrix in $\mathbb R^d$, the
Lam\'e parameters $\mu$ and $\lambda$ satisfy $\mu>0$ and $\lambda+2\mu>0$ such
that the linear operator $\Delta^*:=\mu\Delta+(\lambda+\mu)\nabla\nabla\cdot$ is
uniformly elliptic (cf. \cite[(10.4)]{M00}), ${\bm M}$ represents the
anisotropic, inhomogeneous medium and is assumed to be a $\mathbb R^{d\times
d}$-valued microlocally isotropic generalized Gaussian random field in a
bounded domain $D_{\bm M}$, and $\bm f$ is either a microlocally isotropic
generalized Gaussian random field in a bounded domain $D_{\bm f}$ or a point
source given by a delta distribution. By \cite{BLZ20}, the displacement admits
the Helmholtz decomposition 
\[
 \bm u = \bm u_{\rm p} + \bm u_{\rm s}\quad \text{in}\,\mathbb R^d\setminus
(\overline{D_{\bm M}\cup D_{\bm f}}),
\]
where $\bm u_{\rm p}$ and $\bm u_{\rm s}$ are the compressional and shear wave
components, respectively, and are required to satisfy the  Kupradze--Sommerfeld
radiation condition 
\begin{eqnarray}\label{eq:RCE}
&&\lim_{|x|\to\infty}|x|^{\frac{d-1}2}\left(\partial_{|x|}\bm u_{\rm
p}-{\rm i}\kappa_{\rm p}\bm u_{\rm p}\right)
=\lim_{|x|\to\infty}|x|^{\frac{d-1}2}\left(\partial_{|x|}\bm
u_{\rm s}-{\rm i}\kappa_{\rm s}\bm u_{\rm s}\right)=0,\qquad\quad
\end{eqnarray}
where
\[
 \kappa_{\rm p}=k/(\lambda+2\mu)^{1/2},\quad \kappa_{\rm s}=k/\mu^{1/2}
\]
are called the compressional and shear wavenumbers, respectively.

Recently, the microlocally isotropic generalized Gaussian random fields are
adopted to characterize the random coefficients of some stochastic wave
equations. The associated covariance operators can be viewed as classical
pseudo-differential operators. These random fields may be too rough to be
classical functions, and should be interpreted as distributions instead. Classical regularity estimates are not
applicable for these stochastic equations due to the roughness of the random
coefficients. The well-posedness of these equations in the distribution
sense and regularity of the distributional solutions need to be investigated.
We refer to \cite{CHL19,LLW} and \cite{LHL,LL19} for the study of the
well-posedness of the solutions for the acoustic and elastic wave equations
with random potentials and sources, respectively.
However, it remains open for the well-posedness and regularity of the solutions
for the stochastic acoustic and elastic wave scattering problems in random media
with random sources. The goal of this paper is to examine the well-posedness and
regularity of the distributional solutions for the stochastic acoustic
scattering problem \eqref{eq:H}--\eqref{eq:RCH} and the stochastic elastic
scattering problem \eqref{eq:E}--\eqref{eq:RCE} by using a unified approach.

This paper is organized as follows. In Section 2, we introduce some Sobolev
spaces of real order and the microlocally isotropic generalized Gaussian
random fields. Sections 3 and 4 address the well-posedness and regularity of the
solutions for the stochastic acoustic and elastic scattering problems,
respectively.

\section{Preliminaries}

In this section, we briefly introduce Sobolev spaces of real order
and microlocally isotropic generalized Gaussian random fields, which are
used in this paper. 

\subsection{Sobolev spaces}

Let $C_0^{\infty}(D)$ be the set of smooth functions compactly
supported in $D\subset\mathbb R^d$, and $\mathcal{D}(D)$ be the space of test
functions, which is $C_0^{\infty}(D)$ equipped with a locally convex
topology  (cf. \cite{AF03}). The dual space $\mathcal{D'}(D)$ of
$\mathcal{D}(D)$ is called the space of distributions on $D$ equipped with a
weak-star topology. Define the product 
\[
\langle u,v\rangle:=\int_{D}u(x)\overline{v(x)}dx
\]
for $u\in\mathcal{D}'(D)$ and $v\in\mathcal{D}(D)$. The distributional partial derivative of $u\in\mathcal{D}'(D)$ satisfies
\[
\langle \partial^\zeta u,\psi\rangle=(-1)^{|\zeta|}\langle u,\partial^\zeta\psi\rangle
\]
for any $\psi\in\mathcal{D}(D)$ and multi-index $\zeta=(\zeta_1, \dots, \zeta_d)$.

For any positive integer $n$ and $1\le p<\infty$, the Sobolev space $W^{n,p}(D)$ is defined by
\[
W^{n,p}(D)=\{u\in L^p(D): \partial^\zeta u\in
L^p(D)\quad\text{for}\quad0\le|\zeta|\le n\},
\]
which is equipped with the norm
\[
\|u\|_{W^{n,p}(D)}:=\left(\sum_{0\le|\zeta|\le n}\|\partial^\zeta u\|_{L^p(D)}^p\right)^{\frac1p}.
\]

For any $r\in\mathbb R_+$, let $r=n+\mu$ with $n=[r]$ being the largest
integer smaller than $r$ and $\mu\in(0,1)$, and define
\[
W^{r,p}(D)=\{u\in W^{n,p}(D): |\partial^\zeta u|_{W^{\mu,p}(D)}<\infty\quad\text{for}\quad |\zeta|=n\}
\]
equipped with the norm
\[
\|u\|_{W^{r,p}(D)}:=\left(\|u\|_{W^{n,p}(D)}^p+\sum_{|\zeta|=n}|\partial^\zeta u|_{W^{\mu,p}(D)}^p\right)^{\frac1p},
\]
where
\[
|u|_{W^{\mu,p}(D)}:=\left(\int_D\int_D\frac{|u(x)-u(y)|^p}{|x-y|^{p\mu+d}}dxdy\right)^{\frac1p}
\]
is the Slobodeckij semi-norm. Denote by $W_0^{r,p}(D)$ the closure of
$C_0^{\infty}(D)$ in $W^{r,p}(D)$.

For any $r\in\mathbb R_+$, the Sobolev space $W^{-r,p}(D)$ of negative order is defined as the dual of $W^{r,q}_0(D)$ with $\frac1p+\frac1q=1$ equipped with the norm
\[
\|u\|_{W^{-r,p}(D)}:=\sup_{v\in W^{r,q}(D),\|v\|_{W^{r,q}(D)}\le1}|\langle u,v\rangle|.
\]

If $D=\mathbb R^d$, there is another kind of Sobolev spaces defined through the
Bessel potential. Let $\mathcal{S}(\mathbb R^d)$ be the Schwartz space of
rapidly decreasing smooth functions, i.e.,
\[
\mathcal{S}(\mathbb R^d):=\{\phi\in C^\infty(\mathbb R^d):\sup_{x\in\mathbb R^d}|x^\zeta\partial^\tau\phi(x)|<\infty\quad\text{for all multi-indices $\zeta$ and $\tau$}\},
\]
and $\mathcal{S}'(\mathbb R^d)$ be the dual space of $\mathcal{S}(\mathbb
R^d)$. Then $\mathcal{D}(\mathbb R^d)\subset\mathcal{S}(\mathbb R^d)$ and
$\mathcal{S}'(\mathbb R^d)\subset\mathcal{D}'(\mathbb R^d)$. For any
$s\in\mathbb R$, define the Bessel potential $\mathcal{J}^s:\mathcal{S}(\mathbb
R^d)\to\mathcal{S}(\mathbb R^d)$ of order $s$ by
\[
\mathcal{J}^su:=(I-\Delta)^{\frac s2}u=\mathcal{F}^{-1}[(1+|\cdot|^2)^{\frac
s2}\hat u],
\]
where $\mathcal{F}^{-1}$ is the inverse Fourier transform. It is easy to verify
that 
\[
(\mathcal{J}^su,v)_{L^2(\mathbb R^d)}=(u,\mathcal{J}^sv)_{L^2(\mathbb R^d)}
\quad \forall\,u,v\in\mathcal{S}(\mathbb R^d),
\] 
where $(\cdot,\cdot)_{L^2(\mathbb R^d)}$ is the inner product in
$L^2(\mathbb R^d)$ satisfying
\[
(u,v)_{L^2(\mathbb R^d)}=\langle u,v\rangle.
\]  
Based on the Bessel potential, we introduce the following Sobolev space of order $s\in\mathbb R$: 
\[
H^{s,p}(\mathbb R^d)=\{u\in\mathcal{S}'(\mathbb R^d):\mathcal{J}^su\in L^p(\mathbb R^d)\}.
\]
Denote $H^s(\mathbb R^d):=H^{s,2}(\mathbb R^d)$, which is a Hilbert space with the inner product
\[
(u,v)_{H^s(\mathbb R^d)}:=(\mathcal{J}^su,\mathcal{J}^sv)_{L^2(\mathbb R^d)}
\]
and the induced norm
\[
\|u\|_{H^s(\mathbb R^d)}:=\|\mathcal{J}^su\|_{L^2(\mathbb R^d)}.
\]

For any set $D\subset\mathbb R^d$, define
\[
H^s(D)=\{u\in\mathcal{D}'(D):u=\tilde u|_D\quad\text{for some extension }\tilde u\in H^s(\mathbb R^d)\}.
\]
Then it holds $H^s(D)=W^{s,2}(D)$ for any real $s\ge0$ and $H^{-n}(D)=W^{-n,2}(D)$ for any integer $n\ge0$ with equivalent norms (cf. \cite{M00,G11}).

\subsection{Microlocally isotropic generalized Gaussian random fields}

Denote by $(\Omega, \mathcal{F},\mathbb{P})$ a complete probability space, where $\Omega$ is a sample space, $\mathcal F$ is a $\sigma$-algebra on $\Omega$, and $\mathbb P$ is a probability measure on the measurable space $(\Omega, \mathcal F)$. Define $\mathcal{D}:=\mathcal{D}(\mathbb{R}^d)$ and
$\mathcal{D'}:=\mathcal{D'}(\mathbb R^d)$. 
A real-valued field $\rho$ is said to be a generalized random field if, for 
each $\omega\in\Omega$, the realization $\rho(\omega)$ belongs to $\mathcal{D'}$ and the mapping 
\begin{eqnarray}\label{l1}
\omega\in\Omega\longmapsto \langle \rho(\omega),\psi\rangle\in\mathbb R
\end{eqnarray}
is a random variable for all $\psi\in \mathcal{D}$.

In particular, a generalized random field is said to be Gaussian if (\ref{l1}) defines a Gaussian random variable for all $\psi\in \mathcal{D}$.
A generalized Gaussian random field $\rho\in\mathcal{D}'$ is uniquely determined by its expectation $\mathbb E\rho\in\mathcal{D}'$ and 
covariance operator $Q_{\rho}: \mathcal{D}\rightarrow \mathcal{D'}$ defined by
\begin{eqnarray*}
\langle\mathbb{E}\rho,\psi\rangle&:=&\mathbb{E}\langle \rho,\psi\rangle\quad\forall\,\psi\in\mathcal{D},\\
\langle Q_{\rho}\psi_1,\psi_2\rangle&:=&\mathbb E\left[(\langle
\rho,\psi_1\rangle-\mathbb E\langle \rho,\psi_1\rangle)(\langle
\rho,\psi_2\rangle-\mathbb E\langle \rho,\psi_2\rangle)\right]\quad\forall\,\psi_1,\psi_2\in\mathcal{D}.
\end{eqnarray*}

It follows from the continuity of $Q_\rho$ and the Schwartz kernel theorem that there exists a unique kernel function $\mathcal{K}_\rho(x,y)$ satisfying 
\begin{eqnarray*}\label{l3}
\langle  {Q}_{\rho}\psi_1,\psi_2\rangle=\int_{\mathbb{R}^d}\int_{\mathbb{R}^d}\mathcal{K}_\rho(x,y)\psi_1(x)\overline{\psi_2(y)}dxdy\quad\forall\,\psi_1,\psi_2\in \mathcal{D}.
\end{eqnarray*} 
The regularity of the covariance operator $Q_\rho$ determines the regularity of the random field $\rho$.

\begin{definition}\label{def:iso}
A generalized Gaussian random field $\rho$ on $\mathbb R^d$ is called
microlocally isotropic of order $-m$ with $m\ge0$ in $D$ if its covariance
operator $Q_\rho$ is a classical pseudo-differential operator having an
isotropic principal symbol $\phi(x)|\xi|^{-m}$ with the micro-correlation strength $\phi\in
C_0^\infty(D)$ being compactly supported in $D$ and $\phi\ge0$.
\end{definition}

Note that the covariance operator with a principle symbol $\phi(x)|\xi|^{-m}$ has similar regularity as the fractional Laplacian. To investigate the regularity of microlocally isotropic Gaussian random fields defined above,
we introduce the centered fractional Gaussian fields (cf. \cite{LSSW16, LW})
defined by
\begin{align}\label{eq:FGF}
h_m(x):=(-\Delta)^{-\frac m4}\dot{W}(x),\quad x\in\mathbb{R}^d,
\end{align}
where $(-\Delta)^{-\frac m4}$ is the fractional Laplacian and
$\dot{W}\in\mathcal{D}'$ denotes the white noise. It is shown in \cite{LW} that $h_m$ is a microlocally isotropic Gaussian random field of order
$-m$ satisfying Definition \ref{def:iso} with $\phi\equiv1$.
Hence the fractional Gaussian field $h_m$ defined by \eqref{eq:FGF} has
the same regularity as the microlocally isotropic Gaussian random field $\rho$
of order $-m$ in Definition \ref{def:iso}. 

In particular, if $m\in(d,d+2)$, the fractional Gaussian field $h_m$ defined above 
is a translation of a classical fractional Brownian motion. More precisely, 
\[
\tilde h_m(x):=\langle h_m,\delta(x-\cdot)-\delta(\cdot)\rangle,\quad x\in\mathbb{R}^d
\]
has the same distribution as the classical fractional Brownian motion with Hurst
parameter $H=\frac{m-d}2\in(0,1)$ up to a multiplicative constant,
where $\delta(\cdot)$ is the Dirac function centered at the origin.

Taking advantages of the relationship between the microlocally isotropic Gaussian
random fields and the fractional Gaussian fields defined in \eqref{eq:FGF}, we
conclude this section by providing the regularity of
microlocally isotropic Gaussian random fields, whose proof can be
found in \cite{LW}.

\begin{lemma}\label{lm:iso}
Let $\rho$ be a microlocally isotropic Gaussian random field of
order $-m$ in $D$ with $m\in[0,d+2)$. 

\begin{itemize}

\item[(i)] If $m\in(d,d+2)$, then $\rho\in C^{0,\alpha}(D)$ almost surely for
all $\alpha\in(0,\frac{m-d}2)$.

\item[(ii)] If $m\in[0,d]$, then $\rho\in W^{\frac{m-d}2-\epsilon,p}(D)$ almost
surely for any $\epsilon>0$ and $p\in(1,\infty)$.
\end{itemize}
\end{lemma}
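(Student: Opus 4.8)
The plan is to transfer every computation to the explicit model field $h_m=(-\Delta)^{-m/4}\dot W$ of \eqref{eq:FGF}, whose covariance operator has the exactly isotropic symbol $|\xi|^{-m}$, and then read off the two regimes from a single spectral integral. As recorded after Definition \ref{def:iso}, $\rho$ and $h_m$ have the same regularity, so it suffices to control the high-frequency (equivalently, near-diagonal) behaviour of the covariance, which for $Q_\rho$ is governed by the principal symbol $\phi(x)|\xi|^{-m}$ up to smoother remainders from the lower-order terms of the classical symbol expansion. I may assume $\rho$ is centered, since a smooth mean does not affect the regularity claims.

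For part (i), with $m\in(d,d+2)$, I would first compute the variogram of the increments. Writing the second moment through the covariance operator,
\[
\mathbb E|\rho(x)-\rho(y)|^2=\langle Q_\rho(\delta_x-\delta_y),\,\delta_x-\delta_y\rangle ,
\]
the principal symbol yields, for $x,y\in D$, the bound $\mathbb E|\rho(x)-\rho(y)|^2\le C|x-y|^{m-d}$. This rests on the scaling identity $\int_{\mathbb R^d}|e^{\mathrm ix\cdot\xi}-e^{\mathrm iy\cdot\xi}|^2|\xi|^{-m}\,d\xi\sim|x-y|^{m-d}$, whose convergence at the origin needs $m<d+2$ and at infinity needs $m>d$, matching the hypothesis exactly; the lower-order symbol terms contribute strictly smoother, bounded pieces. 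Because $\rho$ is Gaussian, the Isserlis (Wick) moment formula upgrades this to $\mathbb E|\rho(x)-\rho(y)|^{2k}\le C_k|x-y|^{k(m-d)}$ for every integer $k$. The Kolmogorov--Chentsov continuity theorem with exponent $2k$ then produces a modification that is $\alpha$-H\"older for every $\alpha<\frac{k(m-d)-d}{2k}$; letting $k\to\infty$ exhausts all $\alpha\in(0,\frac{m-d}2)$, which is (i).

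For part (ii), with $m\in[0,d]$, the field is a genuine distribution, so I would argue through Bessel potentials together with a Gaussian moment bound. Fix $\chi\in C_0^\infty(\mathbb R^d)$ with $\chi\equiv1$ near the support of $\phi$, and fix $s'<\frac{m-d}2$. The operator conjugated by the Bessel potential, namely the covariance operator of $\mathcal J^{s'}(\chi\rho)$, is a classical pseudo-differential operator of order $2s'-m<-d$; such an operator has a continuous kernel, so its diagonal
\[
v(x):=\mathbb E|\mathcal J^{s'}(\chi\rho)(x)|^2
\]
is a genuine continuous function, compactly supported and bounded, and is controlled by $\phi(x)\int_{\mathbb R^d}(1+|\xi|^2)^{s'-\frac m2}\,d\xi<\infty$, the finiteness being precisely the condition $2s'-m<-d$. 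By Gaussianity $\mathbb E|\mathcal J^{s'}(\chi\rho)(x)|^p=c_p\,v(x)^{p/2}$, so Fubini gives
\[
\mathbb E\|\chi\rho\|_{H^{s',p}(\mathbb R^d)}^p=\mathbb E\|\mathcal J^{s'}(\chi\rho)\|_{L^p(\mathbb R^d)}^p=c_p\int_{\mathbb R^d}v(x)^{p/2}\,dx<\infty ,
\]
whence $\chi\rho\in H^{s',p}(\mathbb R^d)$ almost surely. Since $s'$ may be taken strictly above $s:=\frac{m-d}2-\epsilon$, the standard embedding of the Bessel potential space $H^{s',p}$ into the Slobodeckij space $W^{s,p}$ absorbs the $\epsilon$-loss, and restricting to $D$ yields $\rho\in W^{\frac{m-d}2-\epsilon,p}(D)$ almost surely for every $p\in(1,\infty)$.

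The hard part in both regimes is the same symbol analysis: one must isolate the contribution of the principal symbol $\phi(x)|\xi|^{-m}$ near the diagonal (for (i)) or on the diagonal after conjugation by $\mathcal J^{s'}$ (for (ii)) and verify that the remaining lower-order terms are harmless. The cleanest way to bypass this is to reduce everything to $h_m$, where the symbol is exactly $|\xi|^{-m}$ and all the integrals above are explicit, and then transfer the conclusions to $\rho$ using that the two fields share the same regularity.
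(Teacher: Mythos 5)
The paper does not actually prove this lemma: it reduces $\rho$ to the model fractional Gaussian field $h_m=(-\Delta)^{-m/4}\dot W$ and then cites \cite{LW} for the statement, so there is no internal proof to compare against. Your argument supplies the standard proof that such a citation stands in for, and its two pillars are sound: the variogram bound $\mathbb E|\rho(x)-\rho(y)|^2\lesssim|x-y|^{m-d}$ (whose convergence window is exactly $d<m<d+2$) combined with Gaussian hypercontractivity and Kolmogorov--Chentsov for (i), and the trace-on-the-diagonal computation for $\mathcal J^{s'}\chi Q_\rho\chi\mathcal J^{s'}$, an operator of order $2s'-m<-d$, combined with $\mathbb E|Z|^p=c_p(\mathbb E|Z|^2)^{p/2}$ and Fubini for (ii); the final embedding $H^{s',p}\hookrightarrow W^{s,p}$ for $s<s'$ legitimately absorbs the $\epsilon$-loss. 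Two small points deserve care. First, in (i) the pairing $\langle Q_\rho(\delta_x-\delta_y),\delta_x-\delta_y\rangle$ presupposes that the Schwartz kernel extends continuously to the diagonal; for order $-m<-d$ this is true, but it should be stated (or one should work with mollifications) since a priori $\rho$ is only a distribution and $\rho(x)$ is not defined pointwise before the continuous modification is produced. Second, in (ii) the function $v(x)=\mathbb E|\mathcal J^{s'}(\chi\rho)(x)|^2$ is \emph{not} compactly supported, since $\mathcal J^{s'}$ with $s'<0$ is a nonlocal (Bessel potential) convolution; what saves the Fubini step is the exponential decay of the Bessel kernel away from $\operatorname{supp}(\chi\rho)$, which still gives $\int v^{p/2}<\infty$. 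Neither issue is a genuine gap, and your explicit handling of the lower-order symbol terms (strictly smoother contributions in both regimes) is the right way to justify the reduction to the principal symbol that the paper takes for granted.
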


\begin{remark}
For a microlocally isotropic Gaussian random field $\rho$ in Definition
\ref{def:iso}, its kernel has the form
$\mathcal{K}_\rho(x,y)=\phi(x)\mathcal{K}_{h_m}(x,y)+r(x,y)$, where
$\phi\mathcal{K}_{h_m}$ is the leading term with strength $\phi$ and
$r$ is a smooth residual (cf. \cite{LPS08}).
\end{remark}

\section{The acoustic scattering problem}

In this section, we consider the Helmholtz equation \eqref{eq:H}
and study the well-posedness for the acoustic scattering problem under the
following assumptions on the medium $\rho$ and the source $f$.

\begin{assumption}\label{as:rho}
Let the medium $\rho$ be a real-valued centered microlocally isotropic Gaussian
random field of order $-m_\rho$ with $m_\rho\in(d-1,d]$ in a bounded domain $D_\rho\subset\mathbb R^d$.
The principal symbol of its
covariance operator has the form $\phi_\rho(x)|\xi|^{-m_\rho}$ with
$\phi_\rho\in C_0^\infty(D_\rho)$ and $\phi_\rho\ge0$.
\end{assumption}

\begin{assumption}\label{as:f}
Let the real-valued source $f$ satisfy one of the following assumptions:
\begin{itemize}
\item[(i)] $f$ is a centered microlocally isotropic Gaussian random field of
order $-m_f$ with $m_f\in(d-1,d]$ in a bounded domain $D_f\subset\mathbb R^d$. The principal symbol
of its covariance operator has the form $\phi_f(x)|\xi|^{-m_f}$ with $\phi_f\in
C_0^\infty(D_f)$ and $\phi_f\ge0$.

\item[(ii)] $f=-\delta(\cdot-y)a$ is a point source with $y\in\mathbb R^d$ and
some fixed constant $a\in\mathbb R$.
\end{itemize} 
\end{assumption}

For such rough $\rho$ and $f$, the Helmholtz equation \eqref{eq:H} should be
interpreted in the distribution sense.  First let us consider the
equivalent Lippmann--Schwinger integral equation.

\subsection{The Lippmann--Schwinger equation}

Based on the fundamental solution
\begin{equation}\label{eq:Green}
\Phi_d(x,y,k)=\left\{
\begin{aligned}
\frac{\rm i}4H_0^{(1)}(k|x-y|),\quad d=2,\\
\frac{e^{{\rm i}k|x-y|}}{4\pi|x-y|},\quad d=3,
\end{aligned}
\right.
\end{equation} 
of the equation $\Delta u+k^2u=-\delta(\cdot-y)$ in $\mathbb R^d$, 
the Lippmann--Schwinger integral equation has the form
\begin{eqnarray}\label{eq:LS}
 u(x)-k^2\int_{\mathbb R^d} \Phi_d(x,z,k)\rho(z) u(z)dz= -\int_{\mathbb R^d}\Phi_d(x,z,k)f(z)dz.
\end{eqnarray}

Define two operators 
\begin{align*}
(H_k  v)(x)&:=\int_{\mathbb{R}^d} \Phi_d(x,z,k) v(z)dz,\\
(K_k  v)(x)&:=\int_{\mathbb{R}^d} \Phi_d(x,z,k)\rho(z) v(z)dz,
\end{align*}
which have the following properties.

\begin{lemma}\label{lm:operators}
Let $\rho$ satisfy Assumption \ref{as:rho}. Let $ D\subset\mathbb{R}^d$ be a bounded set and $
G\subset\mathbb{R}^d$ be a bounded set with a locally Lipschitz boundary.
\begin{itemize}
\item[(i)] The operator $H_k:H_0^{-\beta}( D)\to H^\beta( G)$ is bounded for any $\beta\in(0,1]$.

\item[(ii)] The operator $H_k: W_0^{-\gamma,p}( D)\to W^{\gamma,q}( G)$ is compact for any $q\in(2,\infty)$, $\gamma\in(0,(\frac1q-\frac12)d+1)$ and $p$ satisfying $\frac1p+\frac1q=1$.

\item[(iii)] The operator $K_k: W^{\gamma,q}( G)\to W^{\gamma,q}( G)$ is compact for any $q\in(2,\frac{2d}{2d-2-m_\rho})$ and $\gamma\in(\frac{d-m_\rho}2,(\frac1q-\frac12)d+1)$.
\end{itemize}
\end{lemma}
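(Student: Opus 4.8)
The plan is to realize $H_k$ as the outgoing resolvent of the Helmholtz operator and, on the bounded sets $D$ and $G$, to split it into a principal part of order $-2$ together with a smoothing remainder produced by the compact supports. For (i), given $v\in H_0^{-\beta}(D)$ the function $w=H_k v$ solves $(\Delta+k^2)w=-v$ subject to the radiation condition \eqref{eq:RCH}, so on the Fourier side $\hat w=\hat v/(|\xi|^2-k^2-\mathrm i0)$. Cutting the frequency at $|\xi|=2k$, on the high-frequency region the multiplier is comparable to $|\xi|^{-2}$; since $\beta\le1$ one has $(1+|\xi|^2)^{\beta-2}\le(1+|\xi|^2)^{-\beta}$, which gives $\|w_{\mathrm{high}}\|_{H^\beta}\lesssim\|v\|_{H^{-\beta}}$. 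On the low-frequency region the multiplier times a cutoff is the Fourier transform of a fixed smooth function, so $w_{\mathrm{low}}$ is the convolution of the compactly supported distribution $v$ with a smooth kernel; hence $w_{\mathrm{low}}|_G$ is smooth with $\|w_{\mathrm{low}}\|_{H^\beta(G)}\lesssim\|v\|_{H^{-\beta}(D)}$ by duality. Adding the two contributions proves (i).

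For (ii) I would first upgrade this to the $L^p$ scale: the high-frequency multiplier is a symbol of order $-2$, so by the Mikhlin--H\"ormander theorem $H_k:W_0^{-\gamma,p}(D)\to H^{2-\gamma,p}(G)$ is bounded (the low-frequency part being smoothing as above), gaining two derivatives with unchanged integrability. A Sobolev embedding with increasing integrability on the bounded Lipschitz set then yields $H^{2-\gamma,p}(G)\hookrightarrow W^{s,q}(G)$ with $s=2-\gamma-d(\tfrac1p-\tfrac1q)$, and a direct computation using $\tfrac1p+\tfrac1q=1$ shows that $s-\gamma=2(1-\gamma)-d(1-\tfrac2q)>0$ is equivalent to the stated bound $\gamma<(\tfrac1q-\tfrac12)d+1$. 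Thus $H_k$ maps boundedly into $W^{s,q}(G)$ for some $s>\gamma$, and the Rellich--Kondrachov compact embedding $W^{s,q}(G)\hookrightarrow\hookrightarrow W^{\gamma,q}(G)$ furnishes the compactness. The small discrepancy between Bessel and Slobodeckij spaces for non-integer orders is absorbed by shrinking $s$ slightly, which is harmless since we only need $s>\gamma$.

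For (iii) I would factor $K_k=H_k\circ M_\rho$, where $M_\rho$ denotes multiplication by $\rho$. By Lemma \ref{lm:iso}(ii), since $m_\rho\le d$, almost surely $\rho\in W^{-s_0,p_0}(D_\rho)$ with $s_0=\tfrac{d-m_\rho}2+\epsilon$ for any $\epsilon>0$ and any $p_0\in(1,\infty)$. A Sobolev multiplication estimate then gives, for $\epsilon$ small and $p_0$ large, an almost surely bounded map $M_\rho:W^{\gamma,q}(G)\to W_0^{-\gamma,p}(G)$: the regularity balance requires $\gamma\ge s_0$, which holds since $\gamma>\tfrac{d-m_\rho}2$, while the integrability is handled by H\"older together with the freedom in $p_0$ (here $\tfrac1p+\tfrac1q=1$). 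Because $\gamma\in(\tfrac{d-m_\rho}2,(\tfrac1q-\tfrac12)d+1)\subset(0,(\tfrac1q-\tfrac12)d+1)$, part (ii) applied with the domain set taken to be $G$ shows that $H_k:W_0^{-\gamma,p}(G)\to W^{\gamma,q}(G)$ is compact, so the composition $K_k$ is compact. I would also observe that the admissible range $q\in(2,\tfrac{2d}{2d-2-m_\rho})$ is precisely the condition that the $\gamma$-interval be nonempty.

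The main obstacle I expect is the product estimate in (iii): $\rho$ is a genuine distribution of negative smoothness, so multiplication by $\rho$ is not elementary and must be controlled by a paraproduct or Sobolev multiplication theorem, subject to two competing demands---enough regularity of $v$ to tame the roughness of $\rho$ (forcing $\gamma>\tfrac{d-m_\rho}2$) and enough integrability to land in the domain of $H_k$ supplied by part (ii). Securing the correct indices there and matching them to the stated hypotheses is the crux; by comparison, the resolvent bounds underlying (i)--(ii) are standard once the high/low frequency split is in place.
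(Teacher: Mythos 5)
Your argument is correct in substance but reaches the conclusion by a different route than the paper, at least for parts (i) and (ii). For (i), the paper does not work on the Fourier side at all: it starts from the classical H\"older-space bound $H_k:C^{0,\alpha}(D)\to C^{2,\alpha}(G)$ of Colton--Kress, equips these spaces with the $H^{\beta-2}$ and $H^{\beta}$ inner products of zero extensions, exhibits the adjoint $V=(I-\Delta)H_k(I-\Delta)$, and invokes the Lax duality theorem to transfer boundedness to the induced norms, finishing by density; your high/low frequency splitting of the resolvent multiplier $(|\xi|^2-k^2-\mathrm i0)^{-1}$ is more direct and self-contained, at the cost of having to justify that the cut-off low-frequency multiplier --- which is a compactly supported \emph{distribution} because of the singularity on the sphere $|\xi|=k$, not literally ``a fixed smooth function'' --- still has an entire inverse Fourier transform (Paley--Wiener), so the smoothing conclusion survives. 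For (ii), the paper keeps the exponent $2$ throughout the mapping step and places compact Sobolev embeddings on \emph{both} sides ($W_0^{-\gamma,p}(D)\hookrightarrow H_0^{-1}(D)$ and $H^1(G)\hookrightarrow W^{\gamma,q}(G)$), whereas you gain the two derivatives at exponent $p$ via Mikhlin--H\"ormander and then embed into $W^{s,q}(G)$ with $s>\gamma$; both computations land on exactly the same constraint $\gamma<(\tfrac1q-\tfrac12)d+1$, and your version makes the origin of that threshold more transparent. For (iii) your plan coincides with the paper's: factor $K_k=H_k\circ M_\rho$, use Lemma \ref{lm:iso} to place $\rho$ in a negative-order space, and apply (ii). The one place where you stop short is the multiplication estimate itself: the paper does not prove it either but cites \cite[Lemma 2]{LPS08}, which gives precisely $\|\rho v\|_{W^{-\gamma,p}}\lesssim\|\rho\|_{W^{-\gamma,\tilde p}}\|v\|_{W^{\gamma,q}}$ with the specific third exponent $\tilde p=\tfrac{p}{2-p}$ (so that $\tfrac1{\tilde p}+\tfrac2q=1$); you correctly identify the regularity balance $\gamma>\tfrac{d-m_\rho}2$ and the role of the $q$-restriction as nonemptiness of the $\gamma$-interval, but to close the argument you would need to name such a theorem rather than gesture at ``a paraproduct or Sobolev multiplication theorem.'' Also, since $\rho v$ is supported in $D_\rho$, part (ii) should be applied with $D=D_\rho$ rather than $G$; this is cosmetic.
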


\begin{proof}
(i) It follows from \cite[Theorem 8.1]{CK13} that $H_k$ is bounded from $C^{0,\alpha}(D)$ to $C^{2,\alpha}(G)$ with respect to the corresponding H\"older norms $\|\cdot\|_{C^{0,\alpha}(D)}$ and $\|\cdot\|_{C^{2,\alpha}(G)}$. Define spaces $X:=C^{0,\alpha}(D)$ and $Y:=C^{2,\alpha}(G)$ with scalar products
\[
(f_1,f_2)_X:=(\tilde f_1,\tilde
f_2)_{H^{\beta-2}(\mathbb{R}^d)}\quad\forall\,f_1, f_2\in X
\]
and
\[
(g_1,g_2)_Y:=(\tilde g_1,\tilde g_2)_{H^{\beta}(\mathbb{R}^d)}\quad \forall\,
g_1, g_2\in Y,
\]
respectively, where $\tilde f_i$ and $\tilde g_i$ are the zero extensions of
$f_i$ and $g_i$  in $\mathbb R^d\setminus\overline D$ and $\mathbb
R^d\setminus\overline G$, respectively. It is easy to verify that 
the products defined above satisfy 
\begin{align*}
(f_1,f_2)_X &= (J^{\beta-2}\tilde f_1,J^{\beta-2}\tilde
f_2)_{L^2(\mathbb{R}^d)}=\int_{\mathbb R^d}(1+|\xi|^2)^{\beta-2}\hat{\tilde
f}_1(\xi)\overline{\hat{\tilde f}_2(\xi)}d\xi\\
&\lesssim \|\tilde f_1\|_{L^2(\mathbb R^d)}\|\tilde f_2\|_{L^2(\mathbb R^d)}
\lesssim\|f_1\|_{C^{0,\alpha}(D)}\|f_2\|_{C^{0,\alpha}(D)}
\end{align*}
and
\[
(g_1,g_2)_Y=(J^{\beta}\tilde g_1,J^{\beta}\tilde g_2)_{L^2(\mathbb{R}^d)}
\lesssim\|\tilde g_1\|_{H^\beta(\mathbb R^d)}\|\tilde g_2\|_{H^\beta(\mathbb R^d)}
\lesssim\|g_1\|_{C^{2,\alpha}(G)}\|g_2\|_{C^{2,\alpha}(G)},
\]
where the notation $a\lesssim b$ denotes $a\le Cb$ for some constant $C>0$.

We claim that there exists a bounded operator $V:Y\to X$ defined by $V=(I-\Delta)H_k(I-\Delta)$ such that
\[
(H_kf,g)_Y=(f,Vg)_X\quad\forall\,f\in X, g\in Y.
\]
In fact, for any $g\in Y$, 
\begin{align*}
\|Vg\|_{C^{0,\alpha}(D)}&=\|(I-\Delta)H_k(I-\Delta)g\|_{C^{0,\alpha}(D)}
\lesssim\|H_k(I-\Delta)g\|_{C^{2,\alpha}(D)}\\
&\lesssim \|(I-\Delta)g\|_{C^{0,\alpha}(G)}\lesssim\|g\|_{C^{2,\alpha}(G)}.
\end{align*}
Furthermore,
\begin{align*}
(H_kf,g)_Y&=(J^\beta H_k\tilde f,J^\beta\tilde g)_{L^2(\mathbb R^d)}=(H_k\tilde
f,J^{2\beta}\tilde g)_{L^2(\mathbb R^d)}\\
&=(\widehat{H_k\tilde f},\widehat{J^\beta\tilde g})_{L^2(\mathbb R^d)}
=\int_{\mathbb R^d}\hat\Phi_d(\xi)\hat{\tilde f}(\xi)(1+|\xi|^2)^\beta\overline{\hat{\tilde g}(\xi)}d\xi\\
&=\int_{\mathbb R^d}\hat{\tilde
f}(\xi)(1+|\xi|^2)^{\beta-2}\overline{\left[
(1+|\xi|^2)\hat\Phi_d(\xi)(1+|\xi|^2)\hat{\tilde g}(\xi)\right]}d\xi\\
&=\int_{\mathbb R^d}\hat{\tilde
f}(\xi)(1+|\xi|^2)^{\beta-2}\overline{\widehat{V\tilde g}(\xi)}d\xi
=(J^{\beta-2}\tilde f,J^{\beta-2}V\tilde g)_{L^2(\mathbb R^d)}\\
&=(f,Vg)_X,
\end{align*}
where $\hat\Phi_d$ is the Fourier transform of $\Phi_d(x,y,k)$ with respect to
$x-y$ and satisfies $-|\xi|^2\hat\Phi_d(\xi)+k^2\hat\Phi_d(\xi)=-1$. The claim
is proved. 

It follows from the claim and  \cite[Theorem 3.5]{CK13} that $H_k:X\to Y$ is
bounded with respect to the norms induced by the scalar products on $X$ and
$Y$. More precisely, we have 
\begin{align}\label{eq:Hk}
\|H_kf\|_Y=\|H_kf\|_{H^\beta(G)}\lesssim\|f\|_X=\|f\|_{H^{\beta-2}(D)}\le\|f\|_{H^{-\beta}(D)}
\end{align}
for any $f\in X$ and $\beta\le1$. It then suffices to show that \eqref{eq:Hk} also holds for any $f\in H^{-\beta}(D)$.
Noting that the subspace $C_0^\infty(D)\subset X$ is dense in $L^2(D)$ (cf. \cite[Section 2.30]{AF03}) and 
$
H^{-1}(D)=\overline{L^2(D)}^{\|\cdot\|_{H^{-1}(D)}}
$
(cf. \cite[Section 3.13]{AF03}), we get that \eqref{eq:Hk} holds for any $f\in
H^{-1}(D)$, and hence for any $f\in H^{-\beta}(D)$ since $H^{-\beta}(D)\subset
H^{-1}(D)$. 

(ii) For parameters $p,q$ and $\gamma$ given above, we choose $\beta=1$ such that $\gamma<\beta$, $\frac12-\frac{\beta-\gamma}d<\frac1q$, and hence the embeddings
\[
 W^{-\gamma,p}_0( D)\hookrightarrow H^{-\beta}_0( D), \quad 
H^{\beta}( G)\hookrightarrow W^{\gamma,q}( G)
\]
are compact according to the Kondrachov compact embedding theorem (cf.
\cite{AF03}). Combining with the result in (i) yields that $H_k$ is compact from
$ W^{-\gamma,p}_0( D)$ to $ W^{\gamma,q}( G)$. 

(iii) Note that $\rho \in W^{\frac{m_\rho-d}2-\epsilon,p'}$ for any $\epsilon>0$ and $p'>1$ according to Lemma \ref{lm:iso}.
Then for any $\gamma\in(\frac{d-m_\rho}2,(\frac1q-\frac12)d+1)$, 
there exist $\epsilon>0$ and $p'>1$ such that $\frac{m_\rho-d}2-\epsilon>-\gamma$ and $\frac1{p'}-\frac{\frac{m_\rho-d}2-\epsilon+\gamma}d<\frac1{\tilde p}$ with $\tilde p=\frac p{2-p}$ and $p$ satisfying $\frac1p+\frac1q=1$, which leads to
\[
W_0^{\frac{m_\rho-d}2-\epsilon,p'}(D_\rho)\hookrightarrow W_0^{-\gamma,\tilde p}(D_\rho)
\]
according to the Kondrachov compact embedding theorem, and hence $\rho\in W_0^{-\gamma,\tilde p}(D_\rho)$.
It then follows from  \cite[Lemma 2]{LPS08} that
$\rho v\in W^{-\gamma,p}_0(D_\rho)$ for any $v\in
W^{\gamma,q}( G)$ with
\begin{align}\label{eq:rhov}
\|\rho v\|_{ W^{-\gamma,p}}\lesssim\|\rho \|_{W^{-\gamma,\tilde p}}\|v\|_{
W^{\gamma,q}}.
\end{align}
Consequently, for any $v\in W^{\gamma,q}( G)$, we have $K_kv=H_k(\rho v)\in
W^{\gamma,q}( G)$, which implies that $K_k:W^{\gamma,q}(G)\to W^{\gamma,q}(G)$ is compact according to (ii).
\end{proof}

Before showing the well-posedness of the Lippmann--Schwinger equation \eqref{eq:LS}, we present the unique continuation principle which ensures the uniqueness of the solution of \eqref{eq:LS}. 

\begin{theorem}\label{tm:continuation}
Let $\rho$ satisfy Assumption \ref{as:rho}. If $u\in W_{comp}^{\gamma,q}(\mathbb R^d)$ with $\gamma\in(0,(\frac1q-\frac12)\frac d2+\frac12)$ and $q\in(2,\frac{2d}{d-2})$ is a solution of the homogeneous equation
\[
\Delta u+k^2(1+\rho)u=0
\]
in the distribution sense, then $u\equiv0$.
\end{theorem}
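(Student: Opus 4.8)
The plan is to reduce the global vanishing statement to a unique continuation across $\partial D_\rho$, and then to carry out that continuation after first upgrading the regularity of $u$ so that the singular potential term can be recast in a form amenable to a Carleman estimate.

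\emph{Step 1: exterior vanishing.} Since $\phi_\rho\in C_0^\infty(D_\rho)$, the realization $\rho$ is supported in $\overline{D_\rho}$, so on the open set $\mathbb R^d\setminus\overline{D_\rho}$ the distribution $u$ solves the constant-coefficient Helmholtz equation $\Delta u+k^2u=0$. This operator is elliptic with analytic coefficients, so $u$ is real-analytic there; being compactly supported, $u$ vanishes outside a large ball. By the weak unique continuation property of the Helmholtz equation (equivalently, Rellich's lemma together with connectedness; cf. \cite{CK13}), $u$ vanishes on the unbounded connected component of $\mathbb R^d\setminus\overline{D_\rho}$, hence in a neighborhood of $\partial D_\rho$ approached from the exterior. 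It remains to propagate this vanishing into $D_\rho$.

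\emph{Step 2: regularity bootstrap.} I would first improve the smoothness of $u$. On its compact support $u\in W^{\gamma,q}\hookrightarrow H^\gamma$, while Lemma \ref{lm:iso} together with $m_\rho>d-1$ gives $\rho\in H^{-\frac12+\delta}$ for some $\delta>0$. The multiplication estimate invoked in the proof of Lemma \ref{lm:operators}(iii) (namely \cite[Lemma 2]{LPS08}) yields $\rho u\in W^{-\gamma,p}$ with $\tfrac1p+\tfrac1q=1$, so that $\Delta u=-k^2u-k^2\rho u\in W^{-\gamma,p}$ and, by interior elliptic regularity, $u\in W^{2-\gamma,p}_{loc}$. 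The hypothesis $\gamma<(\tfrac1q-\tfrac12)\tfrac d2+\tfrac12$ — half the upper threshold appearing in Lemma \ref{lm:operators}(iii) — is precisely what forces this gain to survive the passage back to the $L^2$ scale, placing $u\in H^s$ with $s>\tfrac12$ after one step; iterating the same argument raises $s$ until $u$ is continuous and $\rho u\in H^{-\frac12+\delta}$.

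\emph{Step 3: unique continuation, and the obstacle.} The crux is that $\rho$ is not a function but a distribution of negative Sobolev order, so the classical unique continuation principles for Schr\"odinger operators with $L^{d/2}$ or Kato-class potentials do not apply. Here I would exploit $m_\rho>d-1$ decisively: since $\rho\in H^{-\frac12+\delta}$, it can be written in divergence form $\rho=\nabla\cdot\vec W$ with $\vec W:=\nabla(I-\Delta)^{-1}\rho\in H^{\frac12+\delta}\hookrightarrow L^{d}_{loc}$, the critical Lebesgue exponent. Writing $\rho u=\nabla\cdot(\vec W u)-\vec W\cdot\nabla u$ turns the equation into a second-order elliptic equation whose first-order coefficients lie in $L^{d}_{loc}$ and whose zeroth-order coefficient is bounded, and all terms are well defined since $u$ is now continuous. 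One then applies a Carleman-type unique continuation principle valid for such critical first-order coefficients on a connected open set containing both $D_\rho$ and the exterior region where $u=0$; this propagates the vanishing of $u$ across $\partial D_\rho$ into all of $D_\rho$, and combined with Step 1 gives $u\equiv0$ on $\mathbb R^d$. I expect this last step — making a Carleman estimate absorb both the drift term $\vec W\cdot\nabla u$ and the divergence-form term $\nabla\cdot(\vec W u)$ with $\vec W$ only at the critical integrability — to be the principal technical obstacle.
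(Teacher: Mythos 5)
Your Step 1 is standard and correct, and your Step 2 bootstrap is plausible (though the iteration to continuity is delicate for $d=3$ and is asserted rather than carried out). The argument fails, however, exactly where you flag the ``principal technical obstacle'': Step 3 invokes a unique continuation principle that does not exist in the form you need. Writing $\vec W=\nabla(I-\Delta)^{-1}\rho$ gives $\nabla\cdot\vec W=-\rho+(I-\Delta)^{-1}\rho$ (a sign and a smoothing remainder to track, but that is cosmetic); the real issue is that $\vec W$ lands only in $L^d_{loc}$, with no room to spare ($H^{1/2+\delta}(\mathbb R^3)\hookrightarrow L^{3}$ is critical). Unique continuation for $|\Delta u|\le|V||u|+|\vec W||\nabla u|$ with $|\vec W|\in L^d$ is not a citable theorem: the known positive results for first-order terms require roughly $\vec W\in L^{(3d-2)/2}$, which is strictly larger than $d$ for $d\ge3$, and the $L^d$ endpoint is essentially open. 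Your equation additionally carries the divergence-form term $\nabla\cdot(\vec W u)$, for which Carleman machinery is even less forgiving. So the decisive step of the proof is missing, not merely deferred.

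For comparison, the paper avoids local unique continuation entirely and argues globally with a complex geometric optics substitution: set $v=e^{-{\rm i}\eta\cdot x}u$ with $\eta\cdot\eta=-k^2$ and $|\eta|\to\infty$, rewrite the homogeneous equation as the fixed-point identity $v=G_\eta(\rho v)$ with $G_\eta$ the Fourier multiplier $k^2/(|\xi|^2+2\eta\cdot\xi)$, and prove the quantitative decay $\|G_\eta\|_{\mathcal L(H^{-s}(D),H^s(D))}\lesssim (1+k)k^{2s}t^{-(1-2s)}$ for $s\in[0,\tfrac12]$ by splitting frequency space near and away from the characteristic sphere $\{|\xi^-|=kt\}$ (using a reflection change of variables and the Hardy--Littlewood maximal function on the near-characteristic piece). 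Interpolation via \cite[Proposition 2]{LPS08} transfers this to $\mathcal L(W^{-\gamma,p},W^{\gamma,q})$ with $\gamma=\theta s$ and $\theta=(\tfrac1q-\tfrac12)d+1$ --- this, and not a survival condition for an elliptic bootstrap, is the origin of the hypothesis $\gamma<(\tfrac1q-\tfrac12)\tfrac d2+\tfrac12$ --- and combining with the multiplication estimate \eqref{eq:rhov} yields $\|v\|_{W^{\gamma,q}}\lesssim t^{-(1-2s)\theta}\|v\|_{W^{\gamma,q}}$, forcing $v\equiv0$ as $t\to\infty$. To salvage your route you would have to prove the critical-$L^d$ unique continuation yourself, which is a substantially harder problem than the theorem it is meant to establish.
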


\begin{proof}
For any fixed $k>0$, define an auxiliary function $v(x):=e^{-{\rm i}\eta\cdot x}u(x)$ with 
\[
\eta:=(kt,0,\cdots,0,{\rm i}k\sqrt{t^2+1})\in\mathbb C^d,\quad t>1
\]
such that $\eta\cdot\eta=-k^2$ and $\lim_{t\to\infty}|\eta|=\infty$, which satisfies
\[
(\Delta+2{\rm i}\eta\cdot\nabla)v=-k^2\rho v.
\]
The equation above is equivalent to 
\[
v=G_\eta(\rho v),
\]
where $v\in W_{comp}^{\gamma,q}(\mathbb R^d)$ and the operator $G_\eta$ is defined by
\[
G_\eta(f)(x):=\mathcal{F}^{-1}\left[\frac{k^2}{|\xi|^2+2\eta\cdot\xi}\hat f\right](x)
\]
with $\xi=(\xi_1,\cdots,\xi_d)^\top\in\mathbb R^d$.

We first give the estimate of the operator $G_\eta$. Let $D\subset\mathbb R^d$ be a bounded domain containing the supports of both $u$ and $\rho$. For any $f,g\in C_0^{\infty}(D)$, we still denote the zero extensions of $f$ and $g$ in $\mathbb R^d$ by $f$ and $g$, respectively.  For any $s\in[0,\frac12]$, by denoting $\xi^-:=(\xi_1,\cdots,\xi_{d-1})^\top\in\mathbb R^{d-1}$ and $\xi^{--}:=(\xi_2,\cdots,\xi_{d-1})^\top\in\mathbb R^{d-2}$ with $\xi^{--}=0$ if $d=2$, we get
\begin{align*}
\langle G_\eta f,g\rangle=&\langle\widehat{G_\eta f},\hat g\rangle=\int_{\mathbb R^d}\frac{k^2}{|\xi|^2+2\eta\cdot\xi}\hat f(\xi)\overline{\hat g(\xi)}d\xi\\
=&\int_{\mathbb R^d}\frac{k^2}{|\xi|^2+2kt\xi_1+2{\rm i}k\sqrt{t^2+1}\xi_d}\hat f(\xi)\overline{\hat g(\xi)}d\xi\\
=&\int_{\mathbb R^d}\frac{k^2}{(\xi_1+kt)^2-k^2t^2+|\xi^{--}|^2+\xi_d^2+2{\rm i}k\sqrt{t^2+1}\xi_d}\hat f(\xi)\overline{\hat g(\xi)}d\xi\\
=&\int_{\mathbb R^d}\frac{k^2}{|\xi^{-}|^2-k^2t^2+\xi_d^2+2{\rm i}k\sqrt{t^2+1}\xi_d}\hat f(\xi)\overline{\hat g(\xi)}d\xi\\
=&\int_{\Omega_{\rm I}}\frac{k^2(1+|\xi|^2)^s}{|\xi^{-}|^2-k^2t^2+\xi_d^2+2{\rm i}k\sqrt{t^2+1}\xi_d}\widehat{\mathcal{J}^{-s}f}(\xi)\overline{\widehat{\mathcal{J}^{-s}g}(\xi)}d\xi\\
&+\int_{\Omega_{\rm II}}\frac{k^2(1+|\xi|^2)^s}{|\xi^{-}|^2-k^2t^2+\xi_d^2+2{\rm i}k\sqrt{t^2+1}\xi_d}\widehat{\mathcal{J}^{-s}f}(\xi)\overline{\widehat{\mathcal{J}^{-s}g}(\xi)}d\xi\\
=&:{\rm I}+{\rm II}
\end{align*}
with 
\[
\Omega_{\rm I}:=\left\{\xi: ||\xi^-|-kt|>\frac{kt}2\right\}=\left\{\xi: |\xi^-|>\frac{3kt}2\text{ or }|\xi^-|<\frac{kt}2\right\}
\] 
and 
\[
\Omega_{\rm II}:=\left\{\xi: ||\xi^-|-kt|<\frac{kt}2\right\}=\left\{\xi:\frac{kt}2<|\xi^-|<\frac{3kt}2\right\}, 
\]
where the transformation of variables $(\xi_1+kt,\xi_2,\cdots,\xi_d)^\top\mapsto(\xi_1,\xi_2,\cdots,\xi_d)^\top$ and the fact $\hat f(\xi_1-kt,\xi_2,\cdots,\xi_d)=e^{-{\rm i}kt\xi_1}\hat f(\xi_1,\xi_2,\cdots,\xi_d)$ are used.

The first term ${\rm I}$ satisfies
\begin{align}\label{eq:I}
|{\rm I}|\le&\int_{\Omega_{\rm I}}\frac{k^2(1+|\xi|^2)^s}{\left[(|\xi^-|^2-k^2t^2+\xi_d^2)^2+4k^2(t^2+1)\xi_d^2\right]^{\frac12}}|\widehat{\mathcal{J}^{-s}f}||\widehat{\mathcal{J}^{-s}g}|d\xi\notag\\
=&\int_{\Omega_{\rm I}}\frac{k^2(1+|\xi|^2)^s}{\left[((|\xi^-|-kt)^2+\xi_d^2)((|\xi^-|+kt)^2+\xi_d^2)+4k^2\xi_d^2\right]^{\frac12}}|\widehat{\mathcal{J}^{-s}f}||\widehat{\mathcal{J}^{-s}g}|d\xi\notag\\
\le&\int_{\Omega_{\rm I}}\frac{k^2(1+|\xi|^2)^s}{||\xi^-|-kt|((|\xi^-|+kt)^2+\xi_d^2)^{\frac12}}|\widehat{\mathcal{J}^{-s}f}||\widehat{\mathcal{J}^{-s}g}|d\xi\notag\\
\le&\frac{2k}{t}\bigg[\int_{\{\xi:|\xi^-|>\frac{3kt}2\}}\frac{(1+|\xi|^2)^s}{((|\xi^-|+kt)^2+\xi_d^2)^{\frac12}}|\widehat{\mathcal{J}^{-s}f}||\widehat{\mathcal{J}^{-s}g}|d\xi\notag\\
&+\int_{\{\xi:|\xi^-|<\frac{kt}2,|\xi_d|<\frac{kt}2\}}\frac{(1+|\xi|^2)^s}{((|\xi^-|+kt)^2+\xi_d^2)^{\frac12}}|\widehat{\mathcal{J}^{-s}f}||\widehat{\mathcal{J}^{-s}g}|d\xi\notag\\
&+\int_{\{\xi:|\xi^-|<\frac{kt}2,|\xi_d|>\frac{kt}2\}}\frac{(1+|\xi|^2)^s}{((|\xi^-|+kt)^2+\xi_d^2)^{\frac12}}|\widehat{\mathcal{J}^{-s}f}||\widehat{\mathcal{J}^{-s}g}|d\xi\bigg]\notag\\
=&:\frac{2k}{t}\left[{\rm I}_1+{\rm I}_2+{\rm I}_3\right],
\end{align}
where in the third step we use the fact
\begin{align*}
&(|\xi^-|^2-k^2t^2+\xi_d^2)^2+4k^2(t^2+1)\xi_d^2\\
=&\left(|\xi^-|^2+\xi_d^2+k^2t^2\right)^2-4k^2t^2|\xi^-|^2+4k^2\xi_d^2\\
=&\left[(|\xi^-|^2-kt)^2+\xi_d^2\right]\left[(|\xi^-|^2+kt)^2+\xi_d^2\right]+4k^2\xi_d^2.
\end{align*}
For sufficiently large $t>0$, the following estimates hold: 
\[
{\rm I}_1\lesssim\int_{\{\xi:|\xi|>\frac{3kt}2\}}\frac{1}{|\xi|^{1-2s}}|\widehat{\mathcal{J}^{-s}f}||\widehat{\mathcal{J}^{-s}g}|d\xi
\lesssim\frac1{(kt)^{1-2s}}\|f\|_{H^{-s}(D)}\|g\|_{H^{-s}(D)},
\]
\[
{\rm I}_2\lesssim\int_{\{\xi:|\xi|<kt\}}\frac{(1+|\xi|^2)^s}{kt}|\widehat{\mathcal{J}^{-s}f}||\widehat{\mathcal{J}^{-s}g}|d\xi
\lesssim\frac1{(kt)^{1-2s}}\|f\|_{H^{-s}(D)}\|g\|_{H^{-s}(D)}
\]
and
\begin{align*}
{\rm I}_3\lesssim&\int_{\{\xi:|\xi^-|<\frac{kt}2,|\xi_d|>\frac{kt}2\}}\frac{(1+|\xi^-|^2+\xi_d^2)^s}{|\xi_d|}|\widehat{\mathcal{J}^{-s}f}||\widehat{\mathcal{J}^{-s}g}|d\xi\\
\lesssim&\int_{\{\xi:|\xi^-|<\frac{kt}2,|\xi_d|>\frac{kt}2\}}\left(\frac{|\xi^-|^{2s}}{|\xi_d|}+\frac1{|\xi_d|^{1-2s}}\right)|\widehat{\mathcal{J}^{-s}f}||\widehat{\mathcal{J}^{-s}g}|d\xi\\
\lesssim&\frac1{(kt)^{1-2s}}\|f\|_{H^{-s}(D)}\|g\|_{H^{-s}(D)},
\end{align*}
which, together with \eqref{eq:I}, lead to
\begin{align}\label{eq:estiI}
|{\rm I}|\lesssim\frac{k^{2s}}{t^{2-2s}}\|f\|_{H^{-s}(D)}\|g\|_{H^{-s}(D)}.
\end{align}

For term ${\rm II}$, a simple calculation yields 
\begin{align}\label{eq:II}
{\rm II}=&\int_{\{\xi:\frac{kt}2<|\xi^-|<\frac{3kt}2,|\xi_d|>\frac{kt}2\}}\frac{k^2(1+|\xi|^2)^s\widehat{\mathcal{J}^{-s}f}(\xi)\overline{\widehat{\mathcal{J}^{-s}g}(\xi)}}{|\xi^{-}|^2-k^2t^2+\xi_d^2+2{\rm i}k\sqrt{t^2+1}\xi_d}d\xi\notag\\
&+\int_{\{\xi:\frac{kt}2<|\xi^-|<\frac{3kt}2,|\xi_d|<\frac{kt}2\}}\frac{k^2(1+|\xi|^2)^s\widehat{\mathcal{J}^{-s}f}(\xi)\overline{\widehat{\mathcal{J}^{-s}g}(\xi)}}{|\xi^{-}|^2-k^2t^2+\xi_d^2+2{\rm i}k\sqrt{t^2+1}\xi_d}d\xi\notag\\
=&:{\rm II}_1+{\rm II}_2,
\end{align}
where ${\rm II}_1$ satisfies
\begin{align}\label{eq:II1}
|{\rm II}_1|\le&\int_{\{\xi:\frac{kt}2<|\xi^-|<\frac{3kt}2,|\xi_d|>\frac{kt}2\}}\frac{k^2(1+|\xi|^2)^s\left|\widehat{\mathcal{J}^{-s}f}\right|\left|\widehat{\mathcal{J}^{-s}g}\right|}{\left[(|\xi^-|^2-k^2t^2+\xi_d^2)^2+4k^2(t^2+1)\xi_d^2\right]^{\frac12}}d\xi\notag\\
\lesssim&\int_{\{\xi:\frac{kt}2<|\xi^-|<\frac{3kt}2,|\xi_d|>\frac{kt}2\}}\left(\frac{k^2|\xi^-|^{2s}}{kt|\xi_d|}+\frac{k^2}{kt|\xi_d|^{1-2s}}\right)\left|\widehat{\mathcal{J}^{-s}f}\right|\left|\widehat{\mathcal{J}^{-s}g}\right|d\xi\notag\\
\lesssim&\frac{k^{2s}}{t^{2-2s}}\|f\|_{H^{-s}(D)}\|g\|_{H^{-s}(D)}.
\end{align}

It then suffices to estimate ${\rm II}_2$.
Define a function
\begin{align*}
m_t(\xi):=&\frac{k^2}{|\xi^{-}|^2-k^2t^2+\xi_d^2+2{\rm i}k\sqrt{t^2+1}\xi_d}
\end{align*}
and the transformation of variables $\xi\mapsto\xi^*=(\xi',-\xi_d)$ with
\begin{equation*}
\xi'=\left(\frac{2kt}{|\xi^-|}-1\right)\xi^-
\end{equation*} 
and the Jacobian 
\[
J_t(\xi)=\left|\det\frac{\partial \xi^*}{\partial \xi}\right|=\left(\frac{2kt}{|\xi^-|}-1\right)^{d-2}
\]
such that $|\xi'|=2kt-|\xi^-|$. Clearly, the transformation maps the subdomain 
\[
\Omega_1:=\{\xi:\frac{kt}2<|\xi^-|<kt,|\xi_d|<\frac{kt}2\}
\]
to the subdomain 
\[
\Omega_2:=\{\xi:kt<|\xi^-|<\frac{3kt}2,|\xi_d|<\frac{kt}2\}.
\]
Hence, ${\rm II}_2$ satisfies
\begin{align*}
{\rm II}_2=&\int_{\Omega_1\cup\Omega_2}m_t(\xi)(1+|\xi|^2)^s\widehat{\mathcal{J}^{-s}f}(\xi)\overline{\widehat{\mathcal{J}^{-s}g}(\xi)}d\xi\\
=&\int_{\Omega_2}\Big[m_t(\xi)(1+|\xi|^2)^s\widehat{\mathcal{J}^{-s}f}(\xi)\overline{\widehat{\mathcal{J}^{-s}g}(\xi)}\\
&+m_t(\xi^*)(1+|\xi^*|^2)^s\widehat{\mathcal{J}^{-s}f}(\xi^*)\overline{\widehat{\mathcal{J}^{-s}g}(\xi^*)}J_t(\xi)\Big]d\xi\\
=&\int_{\Omega_2}\left[m_t(\xi)+m_t(\xi^*)J_t(\xi)\right](1+|\xi|^2)^s\widehat{\mathcal{J}^{-s}f}(\xi)\overline{\widehat{\mathcal{J}^{-s}g}(\xi)}d\xi\\
&+\int_{\Omega_{2}}m_t(\xi^*)J_t(\xi)\left[(1+|\xi^*|^2)^s-(1+|\xi|^2)^s\right]
\widehat{\mathcal
J^{-s}f}(\xi)\overline{\widehat{\mathcal J^{-s}g}(\xi)}d\xi\\\notag
&+\int_{\Omega_{2}}m_t(\xi^*)J_t(\xi)(1+|\xi^*|^2)^s\big[\widehat{\mathcal
J^{-s} f} (\xi^*)-\widehat{\mathcal
J^{-s}f}(\xi)\big]\overline{\widehat{\mathcal J^{-s}g} (\xi)}d\xi\\\notag
&+\int_{\Omega_{2}}m_t(\xi^*)J_t(\xi)(1+|\xi^*|^2)^s\widehat{\mathcal J^{-s}f} (\xi^*)\big[\overline{\widehat{\mathcal
J^{-s} g } (\xi^*)}-\overline{\widehat{\mathcal
J^{-s}g}(\xi)}\big]d\xi\notag\\
=&:{\rm II}_{21}+{\rm II}_{22}+{\rm II}_{23}+{\rm II}_{24}.
\end{align*}
For any $\xi\in\Omega_2$, we define the function
\[
h(\xi_d^2):=|m_t(\xi)+m_t(\xi^*)J_t(\xi)|.
\]
If $d=2$, it can be easily shown that 
\begin{align*}
h(\xi_d^2):=&\bigg|\frac{k^2}{|\xi^{-}|^2-k^2t^2+\xi_d^2+2{\rm i}k\sqrt{t^2+1}\xi_d}
+\frac{k^2}{|\xi'|^2-k^2t^2+\xi_d^2-2{\rm i}k\sqrt{t^2+1}\xi_d}\bigg|\\
=&\frac{k^2(|\xi^-|^2+|\xi'|^2-2k^2t^2+2\xi_d^2)}{\left[(|\xi^{-}|^2-k^2t^2+\xi_d^2)^2+4k^2(t^2+1)\xi_d^2\right]^{\frac12}\left[(|\xi'|^2-k^2t^2+\xi_d^2)^2+4k^2(t^2+1)\xi_d^2\right]^{\frac12}}
\end{align*}
is decreasing with respect to $\xi_d^2\in[0,\frac{k^2t^2}4)$ and hence
\begin{align*}
h(\xi_d^2)\le h(0)=\frac{2k^2}{(|\xi^-|+kt)(3kt-|\xi^-|)}\lesssim\frac1{t^2}.
\end{align*}
If $d=3$, similarly, we have 
\begin{align*}
h(\xi_d^2)\le& h(0)=k^2\bigg|\frac{1}{|\xi^{-}|^2-k^2t^2}
+\frac{(\frac{2kt}{|\xi^-|}-1)}{|\xi'|^2-k^2t^2}\bigg|\\
=&\frac{k^2}{|\xi^-|}\frac{2kt}{(|\xi^-|+kt)(3kt-|\xi^-|)}\lesssim\frac1{t^2}.
\end{align*}
As a result, we obtain 
\begin{align}\label{eq:II21}
|{\rm II}_{21}|\lesssim\frac1{t^2}\int_{\Omega_2}(1+|\xi|^2)^s\left|\widehat{\mathcal{J}^{-s}f}\right|\left|\widehat{\mathcal{J}^{-s}g}\right|d\xi\lesssim\frac{k^{2s}}{t^{2-2s}}\|f\|_{H^{-s}(D)}\|g\|_{H^{-s}(D)}.
\end{align}
By the mean value theorem, similar to the estimate of $h(\xi_d^2)$ above, we get for some $\theta\in(0,1)$ that 
\begin{align*}
&\left|m_t(\xi^*)J_t(\xi)\left[(1+|\xi^*|^2)^s-(1+|\xi|^2)^s\right]\right|\\
=&\left|m_t(\xi^*)J_t(\xi)s\left(1+\theta|\xi^*|^2+(1-\theta)|\xi|^2\right)^{s-1}(|\xi^*|^2-|\xi|^2)\right|\\
=&\left|\frac{k^2(\frac{2kt}{|\xi^-|}-1)^{d-2}(|\xi'|^2-|\xi^-|^2)}{|\xi'|^2-k^2t^2+\xi_d^2-2{\rm i}k\sqrt{t^2+1}\xi_d}\right|s\left(1+\theta|\xi^*|^2+(1-\theta)|\xi|^2\right)^{s-1}
\lesssim\frac{k^{2s}}{t^{2-2s}},
\end{align*}
which leads to
\begin{align}\label{eq:II22}
|{\rm II}_{22}|\lesssim\frac{k^{2s}}{t^{2-2s}}\|f\|_{H^{-s}(D)}\|g\|_{H^{-s}(D)}.
\end{align}
To estimate ${\rm II}_{23}$ and ${\rm II}_{24}$, we employ the following
characterization of $W^{1,p}({\mathbb R^d})$ introduced in \cite{PH}.

\begin{lemma}\label{HLL}
For $1<p\leq\infty$, the function $u\in W^{1,p}({\mathbb R^d})$ if and only if
there exist $g\in L^p({\mathbb R^d})$ and $C>0$ such that
\begin{eqnarray*}\label{x19}
|u(x)-u(y)|\leq C|x-y|(g(x)+g(y)).
\end{eqnarray*}
Moreover, we can choose $g=M(|\nabla u|)$, where $M$ is defined by
\begin{eqnarray*}\label{x20}
M(f)(x)=\sup_{r>0}\frac{1}{|B(x,r)|}\int_{B(x,r)}|f(y)|dy
\end{eqnarray*}
and is called the Hardy--Littlewood maximal function of $f$.
\end{lemma}

For $f,g\in C_0^{\infty}(D)$, we have $\widehat{\mathcal J^{-s}f},\widehat{\mathcal J^{-s}g}\in\mathcal{S}(\mathbb R^d)\subset H^1(\mathbb R^d)$. An application of Lemma \ref{HLL} gives
\begin{eqnarray*}
\left|\widehat{\mathcal
J^{-s}f}(\xi^*)-\widehat{\mathcal J^{-s}f}(\xi)\right|
\lesssim\big||\xi^*|-|\xi|\big|\big[M(|\nabla \widehat{\mathcal
J^{-s}f}|)(\xi^*)+M(|\nabla \widehat{\mathcal J^{-s}f}|)(\xi)\big],
\end{eqnarray*} 
where $M(|\nabla \widehat{\mathcal J^{-s}f}|)$ satisfies
\begin{eqnarray*}
\|M(|\nabla \widehat{\mathcal J^{-s}f}|)\|_{L^2(\mathbb R^d)}&\lesssim& \|\nabla
\widehat{\mathcal J^{-s}f}\|_{L^2(\mathbb R^d)}\lesssim
\|(I-\Delta)^{\frac12}\widehat{\mathcal J^{-s}f}\|_{L^2(\mathbb R^d)}\notag\\
&=&\|(I-\Delta)^{\frac12}(1+|\cdot|^2)^{-\frac
s2}\hat{f}(\cdot)\|_{L^2(\mathbb R^d)}\notag\\
&=&\|(I+|\cdot|)^{\frac12}(1-\Delta)^{-\frac
s2}f(\cdot)\|_{L^2(\mathbb R^d)}\notag\\
&\lesssim&\|f\|_{H^{-s}(D)}
\end{eqnarray*}
according to \cite[Theorem 2.1]{LP}, and the same for $g$.
The above estimates then lead to
\begin{eqnarray*}\label{x24}
|{\rm II}_{23}|&\lesssim& \frac{k^{1+2s}}{t^{1-2s}}\int_{\Omega_{2}}\big[M(|\nabla
\widehat{\mathcal J^{-s}f_1}|)(\xi^*)+M(|\nabla
\widehat{\mathcal
J^{-s}f}|)(\xi)\big]|\widehat{\mathcal J^{-s}g}(\xi)|d\xi\\
&\lesssim& \frac{k^{1+2s}}{t^{1-2s}}\|f\|_{H^{-s}(D)}\|g\|_{H^{-s}(D)}
\end{eqnarray*}
and 
\begin{eqnarray*}
|{\rm II}_{24}|\lesssim\frac{k^{1+2s}}{t^{1-2s}}\|f\|_{H^{-s}(D)}\|g\|_{H^{-s}(D)},
\end{eqnarray*}
which, together with \eqref{eq:II21} and \eqref{eq:II22}, yield
\begin{align}\label{eq:II2}
|{\rm II}_2|\lesssim\frac{(1+k)k^{2s}}{t^{1-2s}}\|f\|_{H^{-s}(D)}\|g\|_{H^{-s}(D)}.
\end{align}

We conclude from \eqref{eq:estiI}--\eqref{eq:II1} and \eqref{eq:II2} that
\[
|\langle G_\eta f,g\rangle|\lesssim \frac{(1+k)k^{2s}}{t^{1-2s}}\|f\|_{H^{-s}(D)}\|g\|_{H^{-s}(D)}
\]
for any $f,g\in C_0^\infty(D)$, which can be easily extended to $f,g\in H^{-s}(D)$ by taking the procedure used in the estimate of \eqref{eq:Hk}. Hence, we get
\[
\|G_\eta\|_{\mathcal{L}(H^{-s}(D),H^s(D))}\lesssim\frac{(1+k)k^{2s}}{t^{1-2s}},
\]
which, together with \cite[Proposition 2]{LPS08}, leads to
\[
\|G_\eta\|_{\mathcal{L}(W^{-\gamma,p}(D),W^{\gamma,q}(D))}\lesssim\frac{(1+k)^\theta k^{2s\theta}}{t^{(1-2s)\theta}},
\]
where $p$ satisfies $\frac1p+\frac1q=1$, $\theta=(\frac1q-\frac12)d+1$ and $\gamma=\theta s\in(0,(\frac1q-\frac12)\frac d2+\frac12)$. Utilizing \eqref{eq:rhov} gives 
\[
\|v\|_{W^{\gamma,q}(D)}=\|G_\eta(\rho v)\|_{W^{\gamma,q}(D)}\lesssim\frac{(1+k)^\theta k^{2s\theta}}{t^{(1-2s)\theta}}\|\rho \|_{W^{-\gamma,\tilde p}(D)}\|v\|_{W^{\gamma,q}(D)},
\]
where $\tilde p=\frac p{2-p}$. The proof is completed by letting $t\to\infty$.
\end{proof}

\begin{theorem}\label{tm:LS}
Let $\rho$ satisfy Assumption \ref{as:rho}. Then the Lippmann--Schwinger equation
\eqref{eq:LS} admits a unique solution $ u\in W^{\gamma,q}_{loc}(\mathbb R^d)$ almost surely with $\gamma\in(\frac{d-m}2,(\frac1q-\frac12)\frac d2+\frac12)$ and $q\in(2,\frac{2d}{3d-2-2m})$, where
\begin{itemize}
\item[(i)] $m=m_\rho\wedge m_f$ if the condition (i) in Assumption \ref{as:f} holds
\end{itemize}
or
\begin{itemize}
\item[(ii)] $m=m_\rho$ if the condition (ii) in Assumption \ref{as:f} holds.
\end{itemize}
\end{theorem}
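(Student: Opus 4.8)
The plan is to recast the Lippmann--Schwinger equation \eqref{eq:LS} as the operator equation
\[
(I-k^2K_k)u=-H_kf
\]
posed on $W^{\gamma,q}(D)$, where $D\subset\mathbb R^d$ is a bounded domain with locally Lipschitz boundary containing $\overline{D_\rho}$ together with $\overline{D_f}$ (case (i)) or the point $y$ (case (ii)). First I would check that the exponents $(\gamma,q)$ in the statement fall inside the admissibility ranges of Lemma \ref{lm:operators}(iii): since $m\le m_\rho$ one has $\frac{d-m}2\ge\frac{d-m_\rho}2$, and a short computation (reducing to $d+m_\rho\ge2m$) shows $\frac{2d}{3d-2-2m}\le\frac{2d}{2d-2-m_\rho}$, so the chosen $\gamma$ and $q$ are admissible and $K_k$ is compact on $W^{\gamma,q}(D)$. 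Consequently $I-k^2K_k$ is a Fredholm operator of index zero, and by the Fredholm alternative it suffices to produce a right-hand side in the correct space together with injectivity of $I-k^2K_k$.

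Next I would verify that the source term $-H_kf$ lies in $W^{\gamma,q}(D)$, treating the two cases separately. In case (i), Lemma \ref{lm:iso}(ii) gives $f\in W^{\frac{m_f-d}2-\epsilon,p'}$ almost surely for any $p'>1$; since $\gamma>\frac{d-m}2\ge\frac{d-m_f}2$, for $\epsilon$ small enough and a suitable $p'$ a Kondrachov embedding places $f\in W^{-\gamma,p}_0(D_f)$, and Lemma \ref{lm:operators}(ii) then yields $H_kf\in W^{\gamma,q}(D)$. In case (ii) one has $-H_kf=a\Phi_d(\cdot,y,k)$, so the required regularity follows from a direct estimate of the fundamental solution: away from $y$ it is smooth, while near $y$ its singularity is $|x-y|^{-(d-2)}$ for $d=3$ and logarithmic for $d=2$, which lies in $W^{\gamma,q}_{loc}$ in the admissible range. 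Applying $(I-k^2K_k)^{-1}$ and extending the solution to all of $\mathbb R^d$ through $u=k^2H_k(\rho u)-H_kf$ then gives a representative in $W^{\gamma,q}_{loc}(\mathbb R^d)$.

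It remains to prove injectivity. Suppose $u\in W^{\gamma,q}(D)$ solves the homogeneous equation $u=k^2K_ku=k^2H_k(\rho u)$; since $\rho u$ is supported in $D_\rho$, extending by the same volume potential produces a function on $\mathbb R^d$ that solves $\Delta u+k^2(1+\rho)u=0$ in the distribution sense and, because $\Phi_d$ is the outgoing fundamental solution, automatically satisfies the Sommerfeld radiation condition \eqref{eq:RCH}. Choosing $R$ large enough that $\overline{D_\rho}\subset B_R$, Green's identity over $B_R$ together with the real-valuedness of $\rho$ gives ${\rm Im}\int_{\partial B_R}\bar u\,\partial_\nu u\,ds=-k^2\,{\rm Im}\langle\rho u,u\rangle=0$, and the radiation condition then forces the far-field pattern of $u$ to vanish. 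By Rellich's lemma $u\equiv0$ in the unbounded component of $\mathbb R^d\setminus\overline{D_\rho}$, so $u$ has compact support; since the admissible range of $(\gamma,q)$ is contained in that of Theorem \ref{tm:continuation} (indeed $\frac{2d}{3d-2-2m}\le\frac{2d}{d-2}$ because $m\le d$), the unique continuation principle yields $u\equiv0$. Injectivity of $I-k^2K_k$ combined with the Fredholm alternative then gives existence and uniqueness.

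The main obstacle I expect is the injectivity step, specifically justifying the Green's identity and the far-field/Rellich argument in the presence of the rough random medium $\rho$: one must interpret the term $\langle\rho u,u\rangle$ as a $W^{-\gamma,p}$--$W^{\gamma,q}$ duality pairing (using $\rho v\in W^{-\gamma,p}_0$ from \eqref{eq:rhov}) and argue that it is real so that its imaginary part vanishes, which is what legitimizes the reduction to a compactly supported solution — the hypothesis needed to invoke Theorem \ref{tm:continuation}. A secondary technical point is the careful bookkeeping of the three overlapping exponent windows (Lemma \ref{lm:operators}(iii), the right-hand side regularity, and Theorem \ref{tm:continuation}), to confirm they admit a common nonempty range for each choice of $m$ and for each source type.
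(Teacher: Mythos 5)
Your proposal is correct and follows essentially the same route as the paper: recast \eqref{eq:LS} as $(I-k^2K_k)u=-H_kf$, use Lemma \ref{lm:operators} for compactness of $K_k$ and for placing $-H_kf$ in $W^{\gamma,q}$ in each of the two source cases, and settle injectivity via the unique continuation principle of Theorem \ref{tm:continuation}. The one place you go beyond the paper is the injectivity step: the paper simply asserts that triviality of the homogeneous solution ``has been proved in Theorem \ref{tm:continuation},'' whereas that theorem requires a \emph{compactly supported} solution, and your explicit Green's identity/Rellich argument (using the real-valuedness of $\rho$ and the duality pairing $\langle\rho u,u\rangle$) is exactly the missing reduction that legitimizes the citation.
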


\begin{proof}
Let $ G\subset\mathbb{R}^d$ be any bounded set with a locally Lipschitz
boundary. Based on the definition of the operator $K_k$, the Lippmann--Schwinger
equation can be written in the form
\begin{align}\label{eq:LSoperators}
(I-k^2K_k) u= -H_kf,
\end{align}
where the operator $I-k^2K_k: W^{\gamma,q}( G)\to W^{\gamma,q}( G)$ is Fredholm according to Lemma \ref{lm:operators}. 
It follows from the Fredholm alternative theorem that \eqref{eq:LSoperators} has
a unique solution $ u\in W^{\gamma,q}( G)$ if 
\begin{align}\label{eq:LShomo}
(I-k^2K_k) u=0
\end{align}
has only the trivial solution $ u\equiv0$, which has been proved in Theorem \ref{tm:continuation}.

Next is to show $H_kf\in W^{\gamma,q}( G)$. We consider the following two
cases:

If the condition (i) in Assumption \ref{as:f} holds, for any $q,\gamma$ given above and $p$ satisfying $\frac1p+\frac1q=1$, there
exist $\epsilon>0$ and $p'>1$ such that 
\[
f\in W_0^{\frac{m_f-d}2-\epsilon,p'}(D_f)\hookrightarrow  W^{-\gamma,p}_0(D_f)
\] 
and hence $H_kf\in W^{\gamma,q}( G)$ according to Lemma \ref{lm:operators}.

If the condition (ii) in Assumption \ref{as:f} holds, it holds
$H_kf=\Phi_d(\cdot,y,k)\in W^{1,p'}( G)$ for any $p'\in(1,3-\frac d2)$ according
to \cite[Lemma 3.1]{LW}. Then there exists $p'=3-\frac d2-\epsilon$ for a
sufficiently small $\epsilon>0$ satisfying
$\frac1{p'}-\frac{1-\gamma}d<\frac1q$, such that 
\[
W^{1,p'}( G)\hookrightarrow  W^{\gamma,q}( G),
\]
and hence $H_kf=\Phi_d(\cdot,y,k)\in W^{\gamma,q}( G)$. 
\end{proof}

\subsection{Well-posedness}

Now we present the well-posedness on the solution of
\eqref{eq:H}--\eqref{eq:RCH} in the distribution sense by showing the
equivalence to the Lippmann--Schwinger
equation.

\begin{theorem}\label{tm:wellposedH}
Let $\rho$ satisfy Assumption \ref{as:rho}. The acoustic scattering problem
\eqref{eq:H}--\eqref{eq:RCH} is well-defined in the
distribution sense, and admits a unique solution $u\in W^{\gamma,q}_{loc}(\mathbb R^d)$ almost surely with $\gamma\in(\frac{d-m}2,(\frac1q-\frac12)\frac d2+\frac12)$ and $q\in(2,\frac{2d}{3d-2-2m})$, where
\begin{itemize}
\item[(i)] $m=m_\rho\wedge m_f$ if the condition (i) in Assumption \ref{as:f} holds
\end{itemize}
or
\begin{itemize}
\item[(ii)] $m=m_\rho$ if the condition (ii) in Assumption \ref{as:f} holds.
\end{itemize}
\end{theorem}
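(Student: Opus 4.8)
The plan is to reduce the well-posedness of the scattering problem \eqref{eq:H}--\eqref{eq:RCH} to the already-established unique solvability of the Lippmann--Schwinger equation in Theorem \ref{tm:LS}, by proving that the two formulations are equivalent in the distribution sense. Before turning to the equivalence, I would first confirm that the Helmholtz equation \eqref{eq:H} is meaningful as a distributional identity for $u\in W^{\gamma,q}_{loc}(\mathbb R^d)$. Since $\rho\in W^{-\gamma,\tilde p}_0(D_\rho)$ almost surely by Lemma \ref{lm:iso} (with $\tilde p=\frac p{2-p}$ as in the proof of Lemma \ref{lm:operators}(iii)), the multiplication estimate \eqref{eq:rhov} guarantees that the product $\rho u\in W^{-\gamma,p}_0(D_\rho)$ is well-defined and compactly supported; together with the source $f$ from Assumption \ref{as:f}, this makes every term in \eqref{eq:H} a well-defined distribution, so the scattering problem is well-defined in the distribution sense.

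For the equivalence I would prove both implications using the defining property $(\Delta+k^2)\Phi_d(\cdot,y,k)=-\delta(\cdot-y)$ of the fundamental solution \eqref{eq:Green}. The easier direction is that any solution $u$ of the Lippmann--Schwinger equation \eqref{eq:LS} solves the scattering problem: rewriting \eqref{eq:LS} as $u=-H_k g$ with the compactly supported effective source $g:=f-k^2\rho u$, applying the operator $\Delta+k^2$ distributionally and invoking the fundamental solution property yields $(\Delta+k^2)u=g=f-k^2\rho u$, which is exactly \eqref{eq:H}; moreover, since $u=-\Phi_d(\cdot,\cdot,k)*g$ is a volume potential of the outgoing fundamental solution with compactly supported density, it inherits the Sommerfeld radiation condition \eqref{eq:RCH}.

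The harder direction is the converse. Starting from a distributional solution $u$ of \eqref{eq:H}--\eqref{eq:RCH}, I would set $g:=f-k^2\rho u$ (again compactly supported by the multiplication estimate) so that $(\Delta+k^2)u=g$, and then argue that the radiation condition forces the Green representation $u=-H_k g$, which upon expanding $g$ is precisely \eqref{eq:LS}. The crux is to show that the difference $w:=u+H_k g$ is a radiating solution of the homogeneous Helmholtz equation $(\Delta+k^2)w=0$ and must therefore vanish. This is a Rellich-type uniqueness argument that has to be carried out at the low regularity $W^{\gamma,q}_{loc}$: away from $\overline{D_\rho\cup D_f}$ the solution $w$ is smooth by interior elliptic regularity, so the classical Rellich--Sommerfeld uniqueness applies in the exterior region. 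I expect this Rellich step, together with the careful interpretation of the radiation condition \eqref{eq:RCH} for a solution that is only locally in $W^{\gamma,q}$, to be the main obstacle.

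Having established the equivalence, I would close the argument by invoking Theorem \ref{tm:LS}: the Lippmann--Schwinger equation \eqref{eq:LS} admits a unique solution $u\in W^{\gamma,q}_{loc}(\mathbb R^d)$ almost surely for the stated ranges of $\gamma$ and $q$, with $m=m_\rho\wedge m_f$ or $m=m_\rho$ according to the two cases of Assumption \ref{as:f}. The equivalence then transfers existence, uniqueness, and the regularity $u\in W^{\gamma,q}_{loc}(\mathbb R^d)$ directly to the scattering problem \eqref{eq:H}--\eqref{eq:RCH}, completing the proof.
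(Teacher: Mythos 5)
Your proposal is correct and follows essentially the same route as the paper: both directions of the equivalence with the Lippmann--Schwinger equation are established via the fundamental solution property of $\Phi_d$, the product $\rho u$ is controlled by the multiplication estimate \eqref{eq:rhov}, and the converse direction rests on the Green representation over a large ball with the boundary terms killed by the radiation condition \eqref{eq:RCH}, after which Theorem \ref{tm:LS} delivers existence, uniqueness, and regularity. The Rellich-type step you flag is exactly the point the paper handles by the operator $T$ identity (following \cite[Lemma 4.3]{LHL}) and the limit $r\to\infty$.
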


\begin{proof}
First we show the existence of the solution of \eqref{eq:H}--\eqref{eq:RCH}.
Specifically, we show that the solution of the Lippmann--Schwinger equation
\eqref{eq:LS} is also a solution of \eqref{eq:H}--\eqref{eq:RCH} in the
distribution sense. Suppose that $ u^*\in
W^{\gamma,q}_{loc}(\mathbb R^d)$ is the solution of \eqref{eq:LS} and satisfies
\[
 u^*(x)-k^2\int_{\mathbb{R}^d} \Phi_d(x,z,k)\rho(z) u^*(z)dz= -\int_{\mathbb R^d}\Phi_d(x,z,k)f(z)dz,\quad x\in\mathbb{R}^d.
\]

Note that the Green tensor $ \Phi_d$ is the fundamental solution for the operator $\Delta+k^2 I$:
\[
(\Delta+k^2) \Phi_d(\cdot,z,k)=-\delta(\cdot-z),
\]
where the Dirac delta function $\delta$ is a distribution, i.e., $\delta\in\mathcal{D}'$. It indicates that, for any $\psi\in\mathcal{D}$, 
\[
\langle(\Delta+k^2) \Phi_d(\cdot,z,k),\psi\rangle=-\langle\delta(\cdot-z),\psi\rangle=-\overline{\psi(y)}.
\]
We obtain for any $\psi\in\mathcal{D}$ that 
\begin{eqnarray*}
&&\langle\Delta u^*+k^2 u^*+k^2\rho u^*,\psi\rangle\\
&=&k^2\left\langle\int_{\mathbb{R}^d}\left(\Delta+k^2\right)
\Phi_d(\cdot,z,k)\rho(z) u^*(z)dz,\psi\right\rangle\\
&&\quad -\left\langle\int_{\mathbb{R}^d}\left(\Delta+k^2\right)
\Phi_d(x,z,k)f(z)dz,\psi\right\rangle
+k^2\langle \rho  u^*,\psi\rangle\\
&=&k^2\int_{\mathbb{R}^d}\rho(z) u^*(z)\left\langle\left(\Delta+k^2\right)
\Phi_d(\cdot,z,k),\psi\right\rangle dz\\
&&\quad -\int_{\mathbb R^d}f(z)\left\langle\left(\Delta+k^2\right)
\Phi_d(\cdot,y,k),\psi\right\rangle dz +k^2\langle \rho  u^*,\psi\rangle\\
&=&-k^2\int_{\mathbb{R}^d}\rho(z) u^*(z)\overline{\psi(z)}dz+\int_{\mathbb
R^d}f(z)\overline{\psi(z)}dz+k^2\langle \rho  u^*,\psi\rangle\\
&=&\langle f,\psi\rangle.
\end{eqnarray*}
Hence, $ u^*\in W^{\gamma,q}_{loc}(\mathbb R^d)$ is also a solution of
\eqref{eq:H}--\eqref{eq:RCH} in the
distribution sense, which shows the existence of the solution of
\eqref{eq:H}--\eqref{eq:RCH}
according to Theorem \ref{tm:LS}.

The uniqueness of the solution of \eqref{eq:H}--\eqref{eq:RCH} can be proved by
using the
same procedure as that of the Lippmann--Schwinger equation. Let $ u_0$ be any
solution of \eqref{eq:H} with $f=0$ in the distribution sense. It then suffices
to show that $u_0$ is also a solution of \eqref{eq:LShomo} with $f=0$, i.e.,
$u_0\equiv0$. 
In fact, $ u_0$ satisfies
\[
\Delta u_0+k^2 u_0=-k^2\rho  u_0
\]
in the distribution sense, where $\rho \in W^{-\gamma,\tilde p}(D_\rho)$, $ u_0\in W^{\gamma,q}_{loc}(\mathbb R^d)$ and hence $\rho  u_0\in W_0^{-\gamma,p}(D_\rho)$ with $\tilde p=\frac p{2-p}$ according to the proof of Lemma \ref{lm:operators}.
Let $B_r$ be an open ball with radius $r$ large enough such that $D_\rho\subset B_r$. 

Moreover, it has been shown in Theorem \ref{tm:LS} that $\Phi_d(\cdot,y,k)\in W^{1,p'}(B_r)\hookrightarrow W^{\gamma,q}(B_r)$ with $p'=3-\frac d2-\epsilon$ for a sufficiently small $\epsilon>0$.
It then indicates that
\begin{eqnarray}\label{b12}
&&\int_{B_r} \Phi_d(x,z,k)\left[\Delta u_0(z)+k^2 u_0(z)\right]dz\notag\\
&=&-k^2\int_{B_r} \Phi_d(x,z,k)\rho(z) u_0(z)dz.
\end{eqnarray}
Define the operator $T$ by
\[
(T\psi)(x):=\int_{B_r} \Phi_d(x,z,k)\left[\Delta \psi(z)+k^2 \psi(z)\right]dz
\]
for $\psi\in\mathcal{D}$. By the similar arguments
as those in the proof of \cite[Lemma 4.3]{LHL}, we obtain
\begin{eqnarray*}\label{b13}
(T\psi)(x)=-\psi(x)+\int_{\partial
B_r}\left[ \Phi_d(x,z,k)\partial_{\nu}\psi(z)-\partial_\nu\Phi_d(x,z,k)\psi(z)\right]ds(z),
\end{eqnarray*}
where $\nu$ is the unit outward normal vector on the boundary $\partial B_r$.
Then \eqref{b12} turns to be
\begin{eqnarray*}
&&u_0(x)-\int_{\partial B_r}\Big[ \Phi_d(x,z,k)\partial_{\nu} u_0(z)-\partial_{\nu}\Phi_d(x,z,k) u_0(z)\Big]ds(z)\\
&=&k^2\int_{B_r} \Phi_d(x,z,k)\rho(z) u_0(z)dz.
\end{eqnarray*}
Let $r\to\infty$ and applying the radiation condition, we get
\begin{eqnarray*}
 u_0(x)=k^2\int_{\mathbb R^d} \Phi_d(x,z,k)\rho(z) u_0(z)dz, 
\end{eqnarray*}
which implies that $ u_0$ is also a solution of the Lippmann--Schwinger
equation \eqref{eq:LS} with $f=0$, and hence $ u_0\equiv0$ according to Theorem
\ref{tm:LS}. 
\end{proof}

\section{The elastic scattering problem}

In this section, we discuss the well-posedness of the elastic wave equation
\eqref{eq:E} in the distribution sense, where the medium $\bm M$ and the source
$\bm f$ satisfy the following assumptions. 

\begin{assumption}\label{as:M}
Let the medium $\bm M=(M_{ij})_{d\times d}$ be a $\mathbb R^{d\times
d}$-valued and centered microlocally isotropic Gaussian random field of order
$-m_{\bm M}$ with $m_{\bm M}\in(d-1,d]$ in a bounded domain $D_{\bm M}\subset\mathbb R^d$. 
The principal symbol of the covariance
operator of each component $M_{ij}$ has the form $\phi_{ij}(x)|\xi|^{-m_{\bm
M}}$ with $\phi_{ij}\in C_0^\infty(D_{\bm M})$, $\phi_{ij}\ge0$ and
$i,j=1,\cdots,d$.
\end{assumption}

\begin{remark}
For a random medium $\bm M=(M_{ij})_{d\times d}$, if the components are
centered microlocally isotropic Gaussian random fields of different orders,
denoted by $-m_{ij}$, then $\bm M$ satisfies Assumption \ref{as:M} with $m_{\bm
M}:=\min_{i,j\in\{1,\cdots,d\}}m_{ij}$. Moreover, for any component $M_{ij}$
with $m_{ij}>m_{\bm M}$, it holds $\phi_{ij}\equiv0$.
\end{remark}

\begin{assumption}\label{as:f2}
Let the $\mathbb R^d$-valued source $\bm f$ satisfy one of the following assumptions:
\begin{itemize}
\item[(i)] $\bm f$ is a centered microlocally isotropic Gaussian random vector
field of order $-m_{\bm f}$ with $m_{\bm f}\in(d-1,d]$ in a bounded domain $D_{\bm f}\subset\mathbb R^d$.
The principal symbol of its covariance operator has the form $A_{\bm
f}(x)|\xi|^{-m_{\bm f}}$ with $A_{\bm f}\in C_0^\infty(D_{\bm f};\mathbb
R^{d\times d})$.

\item[(ii)] $\bm f=-\delta(\cdot-y)\bm a$ is a point source with $y\in\mathbb
R^d$ and some fixed vector $\bm a\in\mathbb R^d$.
\end{itemize} 
\end{assumption}

In the sequel, we denote by 
\[
\bm{X}:=X^d=\{\bm{g}=(g_1,\cdots,g_d)^\top:g_j\in
X~\forall\, j=1,\cdots,d\} 
\]
the Cartesian product vector space, and use
notations $\bm{W}^{r,p}:=(W^{r,p}(\mathbb{R}^d))^d$ and $\bm{H}^r:=\bm{W}^{r,2}$
for simplicity.

\subsection{The Lippmann--Schwinger equation}

Similarly, we consider the equivalent Lippmann--Schwinger equation for the
elastic wave scattering problem. Denote by $\bm{\Phi}_d(x,y,k)\in\mathbb
C^{d\times d}$ the Green tensor for the Navier equation which has the following
form:
\begin{eqnarray}\label{eq:Gtensor}
\bm{\Phi}_d(x,y,k)=\frac{1}{\mu}\Phi_d(x,y,\kappa_{\rm
s})\bm{I}+\frac{1}{\omega^2}\nabla_x\nabla_x^\top\Big[\Phi_d(x,y,
\kappa_{\rm s})-\Phi_d(x,y,\kappa_{\rm p})\Big],
\end{eqnarray}
where $\bm{I}$ is the $d\times d$ identity matrix and $\Phi_d$ is the fundamental
solution for the Helmholtz equation and is 
defined in \eqref{eq:Green}. 

Based on the Green tensor $\bm \Phi_d$ given in \eqref{eq:Gtensor}, the Lippmann--Schwinger equation has the form
\begin{eqnarray}\label{eq:LS2}
\bm{u}(x)-k^2\int_{\mathbb R^d}\bm{\Phi}_d(x,z,k)\bm M(z)\bm{u}(z)dz=-\int_{\mathbb R^d}\bm{\Phi}_d(x,z,k)\bm f(z)dz.
\end{eqnarray}
Define two operators ${\bm H}_k$ and ${\bm K}_k$ by
\begin{align*}
({\bm H}_k \bm v)(x)&=\int_{\mathbb{R}^d}\bm{\Phi}_d(x,z,k)\bm{v}(z)dz,\\
({\bm K}_k \bm v)(x)&=\int_{\mathbb{R}^d}\bm{\Phi}_d(x,z,k)\bm M(z)\bm{v}(z)dz,
\end{align*}
which have the following properties.

\begin{lemma}\label{lm:operators2}
Let $\bm M$ satisfy Assumption \ref{as:M}. 
Let $ D\subset\mathbb{R}^d$ be a bounded set and $ G\subset\mathbb{R}^d$ be a bounded set with a locally Lipschitz boundary.
\begin{itemize}
\item[(i)] The operator ${\bm H}_k:\bm{H}_0^{-\beta}( D)\to\bm{H}^\beta( G)$ is bounded for any $\beta\in(0,1]$.

\item[(ii)] The operator ${\bm H}_k:\bm{W}_0^{-\gamma,p}( D)\to\bm{W}^{\gamma,q}( G)$ is compact for any $q\in(0,\infty)$, $\gamma\in(0,(\frac1q-\frac12)d+1)$ and $p$ satisfying $\frac1p+\frac1q=1$.

\item[(iii)] The operator ${\bm K}_k:\bm{W}^{\gamma,q}( G)\to\bm{W}^{\gamma,q}(G)$ is compact almost surely for any $q\in(2,\frac{2d}{2d-2-m_{\bm M}})$ and $\gamma\in(\frac{d-m_{\bm M}}2,(\frac1q-\frac12)d+1)$.
\end{itemize}
\end{lemma}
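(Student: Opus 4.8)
The plan is to establish the three claims by mirroring the proof of Lemma~\ref{lm:operators}, the only genuinely new point being that the elastic Green tensor $\bm\Phi_d$ in \eqref{eq:Gtensor} retains the two-order smoothing of the scalar fundamental solution $\Phi_d$ despite the two derivatives $\nabla_x\nabla_x^\top$ it carries. Once this is in place, parts~(ii) and~(iii) follow by duplicating the scalar argument componentwise.

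For part~(i) I would follow the duality scheme of Lemma~\ref{lm:operators}(i). The starting point is the boundedness of the elastic volume potential $\bm H_k:\bm C^{0,\alpha}(D)\to\bm C^{2,\alpha}(G)$, which is the Schauder estimate for the strongly elliptic Navier system $\Delta^*+k^2$ in place of \cite[Theorem 8.1]{CK13} (see \cite{M00}). The crucial step is that this smoothing is not destroyed by the $\nabla_x\nabla_x^\top$ in \eqref{eq:Gtensor}, which follows from the cancellation of the leading singularities of $\Phi_d(\cdot,\cdot,\kappa_{\rm s})$ and $\Phi_d(\cdot,\cdot,\kappa_{\rm p})$: writing $w:=\Phi_d(\cdot,\cdot,\kappa_{\rm s})-\Phi_d(\cdot,\cdot,\kappa_{\rm p})$ and using $(\Delta+\kappa_{\rm s}^2)\Phi_d(\cdot,\cdot,\kappa_{\rm s})=(\Delta+\kappa_{\rm p}^2)\Phi_d(\cdot,\cdot,\kappa_{\rm p})=-\delta$, the Dirac masses cancel and
\[
(\Delta+\kappa_{\rm s}^2)w=-(\kappa_{\rm s}^2-\kappa_{\rm p}^2)\Phi_d(\cdot,\cdot,\kappa_{\rm p}),
\]
whose right-hand side has the same singularity as $\Phi_d$, so $w$ is two orders smoother than $\Phi_d$ and $\nabla_x\nabla_x^\top w$ recovers exactly the regularity of $\Phi_d$. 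Equivalently, on the Fourier side $\hat{\bm\Phi}_d(\xi)$ is the inverse of the Navier symbol $-\mu|\xi|^2\bm I-(\lambda+\mu)\xi\xi^\top+k^2\bm I$, which by uniform ellipticity of $\Delta^*$ behaves like $|\xi|^{-2}$, so $\bm H_k$ is a matrix pseudodifferential operator of order $-2$, exactly as in the scalar case. With this, I would equip $\bm X:=\bm C^{0,\alpha}(D)$ and $\bm Y:=\bm C^{2,\alpha}(G)$ with the $\bm H^{\beta-2}$ and $\bm H^{\beta}$ scalar products, introduce $\bm V:=(I-\Delta)\bm H_k(I-\Delta)$ acting componentwise, check $\bm V:\bm Y\to\bm X$ is bounded, and verify the adjoint identity $(\bm H_k\bm f,\bm g)_{\bm Y}=(\bm f,\bm V\bm g)_{\bm X}$ by the same Fourier computation with the matrix multiplier $(1+|\xi|^2)\hat{\bm\Phi}_d(\xi)(1+|\xi|^2)$. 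Invoking \cite[Theorem 3.5]{CK13} and extending by density as in \eqref{eq:Hk} then yields $\bm H_k:\bm H_0^{-\beta}(D)\to\bm H^{\beta}(G)$.

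Part~(ii) then follows from~(i) exactly as in Lemma~\ref{lm:operators}(ii): choosing $\beta=1$, the Kondrachov embeddings $\bm W_0^{-\gamma,p}(D)\hookrightarrow\bm H_0^{-1}(D)$ and $\bm H^{1}(G)\hookrightarrow\bm W^{\gamma,q}(G)$ are compact componentwise, and composition with~(i) gives compactness. For part~(iii), Lemma~\ref{lm:iso} applied to each entry $M_{ij}$ gives $M_{ij}\in W_0^{-\gamma,\tilde p}(D_{\bm M})$ almost surely with $\tilde p=\frac{p}{2-p}$, for suitable $\epsilon>0$ and $p'>1$ in the stated range of $\gamma$ and $q$. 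The entries of $\bm M\bm v$ are the sums $\sum_j M_{ij}v_j$, each a product of a field in $W^{-\gamma,\tilde p}$ and a function $v_j\in W^{\gamma,q}(G)$, hence in $W_0^{-\gamma,p}$ by the multiplication estimate \eqref{eq:rhov} (cf. \cite[Lemma 2]{LPS08}); thus $\bm M\bm v\in\bm W_0^{-\gamma,p}(D_{\bm M})$ and $\bm K_k\bm v=\bm H_k(\bm M\bm v)\in\bm W^{\gamma,q}(G)$ is compact by~(ii). The almost-sure qualifier is inherited from the almost-sure regularity of $\bm M$.

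The main obstacle is the one flagged in part~(i): controlling the second derivatives $\nabla_x\nabla_x^\top$ in $\bm\Phi_d$ so that the potential still gains two derivatives. The resolution is the singularity cancellation above, equivalently the $|\xi|^{-2}$ decay of $\hat{\bm\Phi}_d$ guaranteed by the uniform ellipticity of $\Delta^*$. With this understood, the remaining Fourier-side bookkeeping and the vector/matrix versions of the density and multiplication arguments are routine adaptations of the scalar proof.
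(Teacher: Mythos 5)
Your proposal is correct and follows essentially the same route as the paper: reduce everything to the scalar Lemma \ref{lm:operators} componentwise, using the Schauder estimate $\bm H_k:\bm C^{0,\alpha}(D)\to\bm C^{2,\alpha}(G)$ for the strongly elliptic Navier system (the paper cites \cite[Theorem 6.8]{GT01}) in place of \cite[Theorem 8.1]{CK13}, then the same duality/density argument for (i), the same Kondrachov embeddings with $\beta=1$ for (ii), and the same entrywise application of Lemma \ref{lm:iso} plus the multiplication estimate of \cite[Lemma 2]{LPS08} for (iii). Your explicit singularity-cancellation argument for $\nabla_x\nabla_x^\top[\Phi_d(\cdot,\cdot,\kappa_{\rm s})-\Phi_d(\cdot,\cdot,\kappa_{\rm p})]$ is a useful elaboration of a point the paper leaves implicit in that citation.
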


\begin{proof}
Noting that the linear operator in \eqref{eq:E} is uniformly elliptic and $\bm
H_k$ is bounded from $\bm C^{0,\alpha}(D)$ to $\bm C^{2,\alpha}(G)$ (cf.
\cite[Theorem 6.8]{GT01}), we may obtain the result in (i) by following
essentially the same procedure as that for Lemma \ref{lm:operators}. The
details are omitted for brevity. 

The proof of (ii) can also be obtained by using the same procedure as the proof
of Lemma 3.1 and noting that the embeddings
\[
\bm W^{-\gamma,p}_0( D)\hookrightarrow\bm H^{-\beta}_0( D),\quad 
\bm H^{\beta}( G)\hookrightarrow\bm W^{\gamma,q}( G)
\]
hold by choosing $\beta=1$ such that $\gamma<\beta$ and $\frac12-\frac{\beta-\gamma}d<\frac1q$.

It then suffices to show (iii). Note that $\bm M \in (W^{\frac{m_{\bm M}-d}2-\epsilon,p'})^{d\times d}$ almost surely for any $\epsilon>0$ and $p'>1$ according to Lemma \ref{lm:iso}, and there must exist $\epsilon>0$ and $p'>1$ such that the embedding
\[
W_0^{\frac{m_{\bm M}-d}2-\epsilon,p'}(D_{\bm M})\hookrightarrow W_0^{-\gamma,\tilde p}(D_{\bm M})
\]
holds according to the Kondrachov compact embedding theorem with $\tilde p=\frac p{2-p}$.
Hence, $\bm M\in (W^{-\gamma,\tilde p}(\mathbb R^d))^{d\times d}$ and $\bm M\bm v\in \bm W^{-\gamma,p}$ almost surely for any $\bm v\in\bm W^{\gamma,q}(G)$ with
\[
\|\bm M\bm v\|_{\bm W^{-\gamma,p}}\lesssim\|\bm M\|_{(W^{-\gamma,\tilde p}(\mathbb R^d))^{d\times d}}\|\bm v\|_{\bm W^{\gamma,q}}
\]
according to \cite[Lemma 2]{LPS08}, where
\[
\|\bm M\|_{(W^{-\gamma,\tilde p})^{d\times
d}}:=\left(\sum_{i,j=1}^d\|M_{ij}\|_{W^{-\gamma,\tilde
p}(\mathbb R^d)}^2\right)^{\frac12},\quad 
\|\bm v\|_{\bm W^{\gamma,q}}:=\left(\sum_{j=1}^d\|v_j\|_{W^{\gamma,q}(\mathbb R^d)}^2\right)^\frac12
\]
for any $\bm v=(v_1,\cdots,v_d)^\top$.
As a result, for any $\bm v\in\bm W^{\gamma,q}( G)$, we get $\bm K_kv=\bm H_k(\bm M\bm v)\in\bm W^{\gamma,q}( G)$ almost surely, which completes the proof.
\end{proof}

\begin{theorem}\label{tm:LS2}
Let $\bm M$ satisfy Assumption \ref{as:M}. Then the Lippmann--Schwinger equation
\eqref{eq:LS2} admits a unique solution $\bm{u}\in\bm{W}^{\gamma,q}_{loc}$
almost surely with $\gamma\in(\frac{d-m}2,(\frac1q-\frac12)\frac d2+\frac12)$ and $q\in(2,\frac{2d}{3d-2-2m})$, where 
\begin{itemize}
\item[(i)] $m=m_{\bm M}\wedge m_{\bm f}$ if the condition (i) in Assumption \ref{as:f2} holds
\end{itemize}
or
\begin{itemize}
\item[(ii)] $m=m_{\bm M}$ if the condition (ii) in Assumption \ref{as:f2} holds.
\end{itemize}
\end{theorem}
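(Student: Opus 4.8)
The plan is to transcribe the proof of Theorem \ref{tm:LS} to the vector setting. Fix a bounded $G\subset\mathbb R^d$ with locally Lipschitz boundary and rewrite \eqref{eq:LS2} as the operator equation $(I-k^2\bm K_k)\bm u=-\bm H_k\bm f$ on $\bm W^{\gamma,q}(G)$. By Lemma \ref{lm:operators2}(iii), $\bm K_k$ is compact on $\bm W^{\gamma,q}(G)$ almost surely for the stated ranges of $q$ and $\gamma$, so $I-k^2\bm K_k$ is Fredholm of index zero and the Fredholm alternative reduces existence and uniqueness (for every right-hand side) to injectivity. Thus it suffices to prove an elastic unique continuation principle: any $\bm u\in\bm W^{\gamma,q}_{comp}(\mathbb R^d)$ solving the homogeneous Navier equation $\Delta^*\bm u+k^2(\bm I+\bm M)\bm u=0$ in the distribution sense must vanish identically, which is the vector analogue of Theorem \ref{tm:continuation}.

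To establish this principle I would adapt the complex-geometric-optics argument of Theorem \ref{tm:continuation}. With the same phase $\eta$ (so that $|\eta|\to\infty$ as $t\to\infty$), the substitution $\bm v:=e^{-{\rm i}\eta\cdot x}\bm u$ turns the homogeneous equation into a fixed-point identity $\bm v=\bm G_\eta(\bm M\bm v)$, where $\bm G_\eta$ is the Fourier multiplier whose matrix symbol is the conjugate of $k^2(\Delta^*+k^2\bm I)^{-1}$. Using the explicit inverse of the Navier symbol, this symbol is the scalar Helmholtz resolvent for the shear wavenumber $\kappa_{\rm s}$ times $\bm I$, plus the correction term coming from $\nabla\nabla^\top[\Phi_d(\cdot,\kappa_{\rm s})-\Phi_d(\cdot,\kappa_{\rm p})]$ in the Green tensor \eqref{eq:Gtensor}. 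The scalar part is handled verbatim by the estimate already proved in Theorem \ref{tm:continuation} (with $k$ replaced by $\kappa_{\rm s}$), since $\bm I$ is a trivial multiplier. I expect the correction term to be the main obstacle: after conjugation its numerator carries a factor $(\xi+\eta)(\xi+\eta)^\top$ that grows like $|\eta|^2\sim t^2$, which a priori threatens the decay in $t$. The key point is that the wavenumber-independent leading parts of $\Phi_d(\cdot,\kappa_{\rm s})$ and $\Phi_d(\cdot,\kappa_{\rm p})$ cancel in the difference, so the correction remains an order $-2$ multiplier; rewriting it, via partial fractions in $|\xi+\eta|^2$, as scalar $\kappa_{\rm p}$- and $\kappa_{\rm s}$-resolvents weighted by the bounded matrix $(\xi+\eta)(\xi+\eta)^\top/|\xi+\eta|^2$ reduces it to the scalar estimate of Theorem \ref{tm:continuation}. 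Combining the two contributions yields $\|\bm G_\eta\|_{\mathcal L(\bm W^{-\gamma,p}(D),\bm W^{\gamma,q}(D))}\lesssim t^{-(1-2s)\theta}$ with $\theta=(\frac1q-\frac12)d+1$ and $\gamma=\theta s$; then the multiplication estimate $\|\bm M\bm v\|_{\bm W^{-\gamma,p}}\lesssim\|\bm v\|_{\bm W^{\gamma,q}}$ from the proof of Lemma \ref{lm:operators2} together with $t\to\infty$ forces $\bm v\equiv0$, hence $\bm u\equiv0$, almost surely.

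It remains to check $\bm H_k\bm f\in\bm W^{\gamma,q}(G)$ in both cases of Assumption \ref{as:f2}. Under (i), Lemma \ref{lm:iso} gives $\bm f\in(W^{\frac{m_{\bm f}-d}2-\epsilon,p'})^d$ almost surely, which embeds compactly into $\bm W^{-\gamma,p}_0(D_{\bm f})$ by the Kondrachov theorem for suitable $\epsilon>0$ and $p'>1$, so $\bm H_k\bm f\in\bm W^{\gamma,q}(G)$ by Lemma \ref{lm:operators2}(ii). Under (ii), $\bm H_k\bm f=\bm\Phi_d(\cdot,y,k)\bm a$, and I would invoke the regularity of the Green tensor \eqref{eq:Gtensor}: the scalar part $\frac1\mu\Phi_d(\cdot,\kappa_{\rm s})\bm I$ lies in $\bm W^{1,p'}(G)$ for $p'\in(1,3-\frac d2)$ exactly as in Theorem \ref{tm:LS}, while the same cancellation used above shows that $\nabla\nabla^\top[\Phi_d(\cdot,\kappa_{\rm s})-\Phi_d(\cdot,\kappa_{\rm p})]$ is no more singular than a scalar Helmholtz fundamental solution and hence also lies in $\bm W^{1,p'}(G)$. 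Choosing $p'=3-\frac d2-\epsilon$ with $\epsilon>0$ small enough that $\frac1{p'}-\frac{1-\gamma}d<\frac1q$ gives the embedding $\bm W^{1,p'}(G)\hookrightarrow\bm W^{\gamma,q}(G)$, whence $\bm H_k\bm f\in\bm W^{\gamma,q}(G)$. Injectivity together with this membership lets the Fredholm alternative deliver the unique solution $\bm u\in\bm W^{\gamma,q}_{loc}$ almost surely, with $m=m_{\bm M}\wedge m_{\bm f}$ in case (i) and $m=m_{\bm M}$ in case (ii).
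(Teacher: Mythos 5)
Your overall architecture --- the Fredholm alternative on $\bm W^{\gamma,q}(G)$ via Lemma \ref{lm:operators2}, an elastic unique continuation principle for injectivity, and a separate check that $\bm H_k\bm f\in\bm W^{\gamma,q}(G)$ in the two cases of Assumption \ref{as:f2} --- is exactly the skeleton the paper intends, and your treatment of the right-hand side (including the cancellation argument showing $\bm\Phi_d(\cdot,y,k)\in(W^{1,p'}(G))^{d\times d}$ for $p'\in(1,3-\frac d2)$, which the paper simply cites from \cite[Lemma 4.1]{LHL}) is sound. Where you part company with the paper is the uniqueness step: the paper does not conjugate the full Navier resolvent. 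It invokes the Helmholtz decomposition $\bm u=\bm u_{\rm p}+\bm u_{\rm s}$ of a radiating solution of the homogeneous equation, notes that $\bm u_{\rm p}$ and $\bm u_{\rm s}$ each satisfy a scalar Helmholtz equation with the Sommerfeld radiation condition (citing \cite{LL19}), and then reuses the scalar machinery of Theorem \ref{tm:continuation} componentwise, so the matrix multiplier never appears.

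The genuine gap in your route is precisely the ``correction term'' you flag and then dismiss. By partial fractions the conjugated Navier symbol inverts to $-\frac{1}{\mu(\zeta\cdot\zeta-\kappa_{\rm s}^2)}(\bm I-P(\zeta))-\frac{1}{(\lambda+2\mu)(\zeta\cdot\zeta-\kappa_{\rm p}^2)}P(\zeta)$ with $\zeta=\xi+\eta$ and $P(\zeta)=\zeta\zeta^\top/(\zeta\cdot\zeta)$, where the denominator is the analytic continuation $\zeta\cdot\zeta$, not a Hermitian norm. On the region where the scalar estimate is delicate --- a neighborhood of the characteristic set $\{|\xi|^2+2\eta\cdot\xi=0\}$, e.g.\ near $\xi=0$ --- one has $\zeta\cdot\zeta=O(1)$ while the entries of $\zeta\zeta^\top$ are of size $k^2t^2$, so $P(\zeta)$ has entries of order $t^2$ and is \emph{not} a bounded matrix uniformly in $t$. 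The cancellation between $\Phi_d(\cdot,\cdot,\kappa_{\rm s})$ and $\Phi_d(\cdot,\cdot,\kappa_{\rm p})$ that you invoke controls the large-$|\xi|$ (order $-2$) behavior of the multiplier, but does nothing about this $t^2$ amplification near the characteristic set, which is exactly where the proof of Theorem \ref{tm:continuation} harvests its $t^{-(1-2s)}$ decay; multiplying that region's contribution by $t^2$ destroys the limit $t\to\infty$. This is the familiar obstruction to CGO arguments for the Navier system: a single phase cannot satisfy $\eta\cdot\eta=-\kappa_{\rm p}^2$ and $\eta\cdot\eta=-\kappa_{\rm s}^2$ simultaneously. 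To close the argument you should decouple first, as the paper does --- project onto the compressional and shear parts so that each satisfies a scalar Helmholtz equation with wavenumber $\kappa_{\rm p}$ or $\kappa_{\rm s}$, to which Theorem \ref{tm:continuation} applies --- rather than estimate the matrix multiplier directly.
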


Note that, according to the Helmholtz decomposition, the solution $\bm u$ of the homogeneous elastic wave equation with $\bm f\equiv0$ in \eqref{eq:E} can be decomposed into the compressional part $\bm u_{\rm p}$ and the shear part $\bm u_{\rm s}$ such that $\bm u=\bm u_{\rm p}+\bm u_{\rm s}$. Both $\bm u_{\rm p}$ and $\bm u_{\rm s}$ satisfy the Helmholtz equation with Sommerfeld radiation condition (cf. \cite{LL19}).
Hence, the proof of Theorem \ref{tm:LS2} can be obtained similarly by following the same procedure as the proof of Theorem \ref{tm:LS} utilizing the fact $\bm{\Phi}_d(\cdot,y,k)\in(W^{1,p'}(G))^{d\times d}$ with
$p'\in(1,3-\frac d2)$ shown in \cite[Lemma 4.1]{LHL}. The details are
omitted for brevity.

\subsection{Well-posedness}

Now we present the existence and uniqueness of the solution of
\eqref{eq:E}--\eqref{eq:RCE} in the distribution sense by utilizing the
Lippmann--Schwinger
equation for the elastic scattering problem. 

\begin{theorem}\label{tm:wellposedE}
Let $\bm M$ satisfy Assumption \ref{as:M}.  The elastic scattering problem \eqref{eq:E}--\eqref{eq:RCE} is well-defined in the distribution
sense, and admits a unique solution $\bm{u}\in\bm{W}^{\gamma,q}_{loc}$ almost
surely with $\gamma\in(\frac{d-m}2,(\frac1q-\frac12)\frac d2+\frac12)$ and $q\in(2,\frac{2d}{3d-2-2m})$, where 
\begin{itemize}
\item[(i)] $m=m_{\bm M}\wedge m_{\bm f}$ if the condition (i) in Assumption \ref{as:f2} holds
\end{itemize}
or
\begin{itemize}
\item[(ii)] $m=m_{\bm M}$ if the condition (ii) in Assumption \ref{as:f2} holds.
\end{itemize}
\end{theorem}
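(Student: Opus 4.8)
The plan is to mirror exactly the structure of the proof of Theorem \ref{tm:wellposedH}, exploiting the fact that Theorem \ref{tm:LS2} has already established the well-posedness of the elastic Lippmann--Schwinger equation \eqref{eq:LS2}. The argument splits into two halves: existence (every solution of \eqref{eq:LS2} solves \eqref{eq:E}--\eqref{eq:RCE} distributionally) and uniqueness (every distributional solution of the homogeneous problem solves the homogeneous Lippmann--Schwinger equation, and hence vanishes).

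For existence, I would let $\bm u^*\in\bm W^{\gamma,q}_{loc}$ be the solution of \eqref{eq:LS2} guaranteed by Theorem \ref{tm:LS2}, and verify directly that it satisfies \eqref{eq:E} in the distribution sense. The key input is that the Green tensor $\bm\Phi_d$ in \eqref{eq:Gtensor} is the fundamental solution for the Navier operator $\Delta^*+k^2\bm I$, so that
\[
(\Delta^*+k^2\bm I)\bm\Phi_d(\cdot,z,k)=-\delta(\cdot-z)\bm I
\]
in the distribution sense. Pairing \eqref{eq:LS2} against a test vector field $\bm\psi\in\bm{\mathcal D}$, applying the operator $\Delta^*+k^2\bm I$, moving it onto $\bm\Phi_d$ and using the sifting property of the delta distribution reproduces $\langle\bm f,\bm\psi\rangle$ on the right-hand side, exactly as in the acoustic case. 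The radiation condition \eqref{eq:RCE} is inherited from the integral representation: away from $\overline{D_{\bm M}\cup D_{\bm f}}$ the field is a superposition of the fundamental solution's compressional and shear parts, each of which satisfies the corresponding Kupradze--Sommerfeld condition by construction.

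For uniqueness, I would take any distributional solution $\bm u_0$ of \eqref{eq:E} with $\bm f\equiv\bm 0$ and show it solves the homogeneous version of \eqref{eq:LS2}, whence $\bm u_0\equiv\bm 0$ by Theorem \ref{tm:LS2}. As in the acoustic proof, one writes $\Delta^*\bm u_0+k^2\bm u_0=-k^2\bm M\bm u_0$, notes that $\bm M\bm u_0\in\bm W^{-\gamma,p}_0$ by the mapping property established in the proof of Lemma \ref{lm:operators2}, integrates $\bm\Phi_d$ against this equation over a large ball $B_r$ containing $D_{\bm M}$, and applies a Green-type identity (the elastic analogue of the operator $T$, as in \cite[Lemma 4.3]{LHL} adapted to the Navier operator). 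Letting $r\to\infty$ and discarding the boundary integrals via the radiation condition \eqref{eq:RCE} recovers the homogeneous Lippmann--Schwinger equation.

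The main obstacle I anticipate is the vector/tensor bookkeeping in the integration-by-parts step. Unlike the scalar Helmholtz case, the Green-type identity for the Navier operator involves the boundary traction operator rather than the plain normal derivative, so the boundary terms on $\partial B_r$ take the form of paired traction and displacement contributions for both wave components; one must verify that the Helmholtz decomposition of $\bm u_0$ lets each of these vanish under \eqref{eq:RCE} as $r\to\infty$. Since Theorem \ref{tm:LS2} has already handled the delicate regularity and the underlying scalar radiation estimates via the decomposition into $\bm u_{\rm p}$ and $\bm u_{\rm s}$, this step is conceptually routine but notationally heavy, and I would state it by reference to \cite{LHL,LL19} rather than reproducing the computation in full.
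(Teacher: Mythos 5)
Your proposal follows essentially the same route as the paper: existence by pairing the Lippmann--Schwinger solution from Theorem \ref{tm:LS2} against test fields and using that $\bm\Phi_d$ is the fundamental solution of $\Delta^*+k^2\bm I$, and uniqueness by reducing a homogeneous distributional solution to the homogeneous integral equation via a Green-type identity on a large ball and the radiation condition, exactly as in the acoustic case. The paper in fact writes out only the existence computation and dispatches uniqueness by reference to the proof of Theorem \ref{tm:wellposedH}, so your explicit attention to the traction boundary terms and the Helmholtz decomposition is, if anything, slightly more careful than the source.
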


\begin{proof}
To show the existence of the solution, we first show that the solution of the
Lippmann--Schwinger equation \eqref{eq:LS2} is also a solution of
\eqref{eq:E}--\eqref{eq:RCE}
in the distribution sense. Suppose that
$\bm{u}^*\in\boldsymbol{W}^{\gamma,q}_{loc}$ is the solution of \eqref{eq:LS2}
and satisfies
\[
\bm{u}^*(x)-k^2\int_{\mathbb R^d}\bm{\Phi}_d(x,z,k){\bm M}(z)\bm{u}^*(z)dz=-\int_{\mathbb R^d}\bm{\Phi}_d(x,z,k)\bm f(z)dz,\quad
x\in\mathbb R^d.
\]
The Green tensor $\bm{\Phi}_d$ is the fundamental solution of the
equation
\[
(\Delta^*+k^2)\bm{\Phi}_d(\cdot,y,k)=-\delta(\cdot-y)\bm I,
\]
where $\Delta^*:=\mu\Delta+(\mu+\lambda)\nabla\nabla\cdot$.
For any $\boldsymbol{\psi}\in\boldsymbol{\mathcal{D}}$, it is easy to note that 
\[
\langle(\Delta^*+k^2)\bm{\Phi}_d(\cdot,y,k),\boldsymbol{\psi}\rangle=-\langle\delta(\cdot-y)\bm I,\boldsymbol{\psi}\rangle=-\overline{\boldsymbol{\psi}(y)}.
\]
Hence, for any $\boldsymbol{\psi}\in\boldsymbol{\mathcal{D}}$, we get
\begin{eqnarray*}
&&\langle\Delta^*\bm{u}^*+k^2(\bm I+{\bm M})\bm{u}^*,\boldsymbol{\psi}\rangle\\
&=&k^2\left\langle\int_{\mathbb{R}^d}\left(\Delta^*+k^2\right)\bm{\Phi}_d(\cdot,
z,k){\bm M}(z)\bm{u}^*(z)dz,\boldsymbol{\psi}\right\rangle\\
&&\quad -\left\langle\int_{\mathbb
R^d}\left(\Delta^*+k^2\right)\bm{\Phi}_d(\cdot,z,k)\bm
f(z)dz,\boldsymbol{\psi}\right\rangle
+k^2\langle {\bm M}\bm{u}^*,\boldsymbol{\psi}\rangle\\
&=&k^2\int_{\mathbb{R}^d}\left({\bm
M}(z)\bm{u}^*(z)\right)^\top\left\langle\left(\Delta^*+k^2\right)\bm{\Phi}
_d(\cdot,z,k),\boldsymbol{\psi}\right\rangle dz\\
&&\quad -\int_{\mathbb
R^d}\bm{f}(z)^\top\left\langle\left(\Delta^*+k^2\right)\bm{\Phi}_d(\cdot,z,k),
\boldsymbol{\psi}\right\rangle dz+k^2\langle {\bm
M}\bm{u}^*,\boldsymbol{\psi}\rangle\\
&=&-k^2\int_{\mathbb{R}^d}\left({\bm
M}(z)\bm{u}^*(z)\right)^\top\overline{\boldsymbol{\psi}(z)}dz+\int_{\mathbb
R^d}\bm{f}(z)^\top\overline{\boldsymbol{\psi}(z)}dz+k^2\langle {\bm
M}\bm{u}^*,\boldsymbol{\psi}\rangle\\
&=&\langle \bm f,\boldsymbol{\psi}\rangle.
\end{eqnarray*}
Hence, $\bm{u}^*\in\bm{W}^{\gamma,q}_{loc}$ is also a solution of
\eqref{eq:E}--\eqref{eq:RCE}
in the distribution sense, which shows the existence of the solution of
\eqref{eq:E}--\eqref{eq:RCE} according to Theorem \ref{tm:LS2}.

The uniqueness of the solution of \eqref{eq:E}--\eqref{eq:RCE} is obtained based
on the same procedure as the proof of Theorem \ref{tm:wellposedH}.
\end{proof}



\begin{thebibliography}{00}

\bibitem{AF03}
R. Adams and J. Fournier, Sobolev Spaces, 2nd ed., Pure and Applied Mathematics
140, Elsevier/Academic Press, Amsterdam, 2003.

\bibitem{BLZ20}
G. Bao, P. Li, and Y. Zhao, Stability for the inverse source problems in elastic
and electromagnetic waves, J.
Math. Pures Appl., 134 (2020), 122--178.

\bibitem{CHL19}
P. Caro, T. Helin, and M. Lassas, Inverse scattering for a random potential,
Anal. Appl. (Singap.), 17 (2019), 513--567.

\bibitem{C75}
P.-L. Chow, Perturbation methods in stochastic wave propagation, SIAM Review, 17
(1975), 57--80.

\bibitem{CK13}
D. Colton and R. Kress, Inverse Acoustic and Electromagnetic Scattering Theory,
3rd ed.,  Applied Mathematical Sciences 93, Springer, New York, 2013.


\bibitem{DF98}
R. C. Dalang and N. E. Frangos, The stochastic wave equation in two spatial
dimensions, Ann. Probab., 26 (1998), 187--212.


\bibitem{FGPS07}
J.-P. Fouque, J. Garnier, G. C. Papanicolaou, and K. S\o{}lna, Wave Propagation
and Time Reversal in Randomly Layered Media, Springer, New York, 2007.


\bibitem{GT01} 
D. Gilbarg and N. S. Trudinger, Elliptic Partial Differential Equations of
Second Order, Reprint of the 1998 edition, Classics in Mathematics,
Springer-Verlag, Berlin, 2001.
 
\bibitem{G11}
P. Grisvard, Elliptic Problems in Nonsmooth Domains, SIAM, Philadelphia, PA,
2011.

\bibitem{PH}
P. Hajlasz, Sobolev spaces on an arbitrary metric space, Potential Anal., 5 (1996), 403--415.

\bibitem{I78}
A. Ishimaru, Wave Propagation and Scattering in Random Media, Academic Press,
New York, 1978.

\bibitem{J02}
J. Jost, Partial Differential Equations, Graduate Texts in Mathematics, vol.
214, Springer-Verlag, New York, 2002.

\bibitem{K62}
J. B. Keller, Wave propagation in random media, Proc. Sympos. Appl. Math., vol.
13, American Mathematical Society, Providence, R.I., 1962, 227--246.

\bibitem{LPS08} 
M. Lassas, L. P\"{a}iv\"{a}rinta, and E. Saksman, Inverse scattering problem for
a two dimensional random potential, Commun. Math. Phys., 279 (2008), 669--703.

\bibitem{LHL}
J. Li, T. Helin, and P. Li, Inverse random source problems for
time-harmonic acoustic and elastic waves, Commun. Part. Diff. Eqs., 45 (2020), 1335--1380. 

\bibitem{LL19}
J. Li and P. Li, Inverse elastic scattering for a random source, 
SIAM J. Math. Anal., 51 (2019), 4570--4603.

\bibitem{LLW}
J. Li, P. Li, and X. Wang, Inverse elastic scattering for a random potential,
arXiv:2007.05790.


\bibitem{LSSW16}
A. Lodhia, S. Sheffield, X. Sun, and S. Watson,
Fractional Gaussian fields: a survey, Probab. Surv., 13 (2016), 1--56.

\bibitem{LW}
P. Li and X. Wang, Inverse random source scattering for the Helmholtz equation
with attenuation, SIAM J. Appl. Math., to appear. 

\bibitem{M00}
W. McLean, Strongly Elliptic Systems and Boundary Integral Equations, Cambridge
University Press, Cambridge, 2000.

\bibitem{MS99}
A. Millet and M. Sanz-Sol\'{e}, A stochastic wave equation in two space
dimension: smoothness of the law, Ann. Probab., 27 (1999), 803--844.

\bibitem{LP}
L. P\"{a}iv\"{a}rinta, Analytic methods for inverse scattering theory, In:
New Analytic and Geometric Methods in Inverse Problems, Springer Lecture
Notes, ed. K. Bingham, Y. Kurylev, E. Somersalo, Springer, New
York, 2003, 165--185.

\end{thebibliography}
\end{document}